\definecolor{myorange1}{rgb}{0.85, 0.38, 0.2}
\definecolor{mygreen1}{rgb}{0.1, 0.4, 0.1}
\newtheorem{theorem}{Theorem}[section]
\newtheorem{lemma}[theorem]{Lemma}
\newtheorem{proposition}[theorem]{Proposition}
\theoremstyle{definition}
\newtheorem{remark}[theorem]{Remark}
\numberwithin{equation}{section}
\newcommand{\N}{\mathbb{N}} 
\newcommand{\Z}{\mathbb{Z}} 
\newcommand{\Q}{\mathbb{Q}} 
\newcommand{\R}{\mathbb{R}} 
\newcommand{\C}{\mathbb{C}} 
\newcommand{\D}{\mathbb{D}} 
\newcommand{\T}{\mathbb{T}} 
\newcommand{\e}{\varepsilon}
\newcommand{\CC}{\mathbb C}
\newcommand{\NN}{\mathbb N}
\newcommand{\HC}{\mathcal{H}^\infty(\mathbb{C}_+)}
\newcommand{\HCK}{\mathcal{H}^\infty(\mathbb{C}_+^k)}
\newcommand{\HCdos}{\mathcal{H}^\infty(\mathbb{C}_+^2)}
\DeclareMathOperator{\re}{Re}
\DeclareMathOperator{\im}{Im}
\DeclareMathOperator{\Arg}{Arg}
\def\N{{\mathbb N}}
\def\R{{\mathbb R}}
\def\C{{\mathbb C}}
\def\ignora#1{}
\def\n3#1{\left\vert  \! \left\vert \! \left\vert \, #1 \, \right\vert \!
  \right\vert \! \right\vert }
\title
{Composition operators on spaces of double Dirichlet series}
\author[F. Bayart]{Fr\'{e}d\'{e}ric Bayart}
\address{ Laboratoire de Math\'ematiques Blaise Pascal, Universit\'e Clermont Auvergne,
Campus des C\'ezeaux 3, place Vasarely TSA 60026 CS 60026 63178 Aubi\`ere cedex, France} \email{Frederic.Bayart@uca.fr}
\author[J. Castillo-Medina]{Jaime Castillo-Medina}
\address{Departamento de An\'{a}lisis Matem\'{a}tico,
Universidad de Valencia, Doctor Moliner 50, 46100 Burjasot
(Valencia), Spain} \email{jaime.castillo@uv.es}
\author[D. Garc\'{\i}a]{Domingo Garc\'{\i}a}
\address{Departamento de An\'{a}lisis Matem\'{a}tico,
Universidad de Valencia, Doctor Moliner 50, 46100 Burjasot
(Valencia), Spain} \email{domingo.garcia@uv.es}
\author[M. Maestre]{Manuel Maestre}
\address{Departamento de An\'{a}lisis Matem\'{a}tico,
Universidad de Valencia, Doctor Moliner 50, 46100 Burjasot
(Valencia), Spain} \email{manuel.maestre@uv.es}
\author[P. Sevilla-Peris]{Pablo Sevilla-Peris}
\address{Instituto Universitario de Matemática Pura y Aplicada,
Universitat Polit\`{e}cnica de Val\`encia, cmno Vera s/n, 46022,
Val\`encia, Spain} \email{psevilla@mat.upv.es }
\thanks{The first author was partially supported by the grant ANR-17-CE40-0021 of the French National Research Agency ANR (project Front). The last four authors were supported by MINECO and FEDER projects MTM2014-57838-C2-2-P and MTM2017-83262-C2-1-P. The second author was also supported by grant FPU14/04365 and MICINN. The third and fourth authors were also supported by  PROMETEO/2017/102}
\keywords{Double Dirichlet series, Composition operator, Superposition operator}
\begin{document}

\begin{abstract}
We study composition operators on spaces of double Dirichlet series, focusing our interest 
on the characterization of the composition operators of the space of bounded double Dirichlet series $\HCdos$.
We also show how the composition operators of this space of Dirichlet series are related to the composition operators of the corresponding spaces of holomorphic functions.
Finally, we give a characterization of the superposition operators in $\HC$ and in the spaces $\mathcal{H}^p$.
\end{abstract}

\maketitle

\section{Introduction}

The study of composition operators appears as a consistent topic of interest in the literature of Banach spaces of holomorphic functions. Generally, a composition operator is defined by a function $\phi$, called the \emph{symbol} of the composition operator $C_\phi$, so that $C_\phi(f) = f \circ \phi$. Composition operators of spaces of Dirichlet series were first studied in \cite{Gordon_Hedenmalm99}, where the authors focus on $\mathcal{H}^2$, the Hilbert space of Dirichlet series whose sequence of coefficients belongs to $\ell^2$. Before we continue, let us introduce some notation. We will denote by $\C_\sigma$, with $\sigma > 0$, the half-plane of complex numbers whose real part is strictly larger than $\sigma$, using $\C_+$ as a substitute notation for the case of $\C_0$ to highlight the relevance of this special case. We denote by $\mathcal{D}$ the set of Dirichlet series which converge in some half-plane, that is, the set of Dirichlet series which converge somewhere, hence in a half-plane (see \cite[Lemma~4.1.1]{Queffelec13}). The two main results of \cite{Gordon_Hedenmalm99} (see also \cite[Theorem~6.4.5]{Queffelec13}) characterize  composition operators acting on $\mathcal H^2$.

\begin{theorem} \cite[Theorem~A]{Gordon_Hedenmalm99} \label{Composition_Operators_on_H^2}
Let $\theta \in \mathbb{R}$. An analytic function $ \phi: \mathbb{C}_\theta \rightarrow \mathbb{C}_\frac{1}{2}$ generates a composition operator $C_{\phi} : \mathcal{H}^2 \rightarrow \mathcal{D}$ if and only if it is of the form $\phi (s) = c_0 s + \varphi(s)$ with $c_0 \in \mathbb{N}_0$ and $ \varphi \in \mathcal{D}$.
\end{theorem}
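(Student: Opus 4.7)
\emph{Sufficiency.} The plan is to prove the two directions of the equivalence separately; sufficiency is a direct computation. Given $\phi(s)=c_0 s+\varphi(s)$ with $c_0\in\mathbb{N}_0$ and $\varphi\in\mathcal{D}$, take $f(s)=\sum_n a_n n^{-s}\in\mathcal{H}^2$ and expand
$$f(\phi(s))=\sum_{n\geq 1} a_n\,(n^{c_0})^{-s}\,e^{-\varphi(s)\log n}.$$
The factor $(n^{c_0})^{-s}$ is a Dirichlet monomial, and $e^{-\varphi(s)\log n}=\sum_{k\geq 0}\frac{(-\log n)^k}{k!}\varphi(s)^k$ lies in $\mathcal{D}$ because $\mathcal{D}$ is closed under products and the exponential series converges absolutely on any right half-plane where $\varphi$ is bounded. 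After verifying absolute convergence of the resulting double sum on such a half-plane, $f\circ\phi$ is a Dirichlet series in $\mathcal{D}$.

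\emph{Necessity.} Assume $C_\phi(\mathcal{H}^2)\subseteq\mathcal{D}$. I would first test on the monomials $f_n(s)=n^{-s}\in\mathcal{H}^2$, which gives $n^{-\phi(s)}\in\mathcal{D}$ for every $n\geq 2$. Expand $2^{-\phi(s)}=\sum_k b_k k^{-s}$ and let $k_0$ be the smallest index with $b_{k_0}\neq 0$; as $\re s\to+\infty$ the leading term dominates and produces
$$\phi(s)=(\log_2 k_0)\,s-\frac{\log b_{k_0}}{\log 2}+o(1).$$
Set $c_0:=\log_2 k_0$. Repeating with $n=3$ gives $c_0=\log_3 k_0(3)$, so $2^{c_0}$ and $3^{c_0}$ are both positive integers, which forces $c_0\in\mathbb{N}_0$. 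Finally, set $\varphi:=\phi-c_0 s$ and analyze $2^{-\varphi(s)}=(2^{c_0})^{s}\cdot 2^{-\phi(s)}$; once one knows that the nonzero $b_k$ are supported on multiples of $2^{c_0}$, this equals $b_{k_0}(1+\widetilde g(s))$ with $\widetilde g\in\mathcal{D}$ vanishing at $+\infty$, and taking logarithms displays
$$\varphi(s)=-\frac{\log b_{k_0}}{\log 2}-\frac{\log(1+\widetilde g(s))}{\log 2}\in\mathcal{D}.$$

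\emph{Main obstacle.} The genuinely delicate step is showing that the nonzero coefficients of $2^{-\phi(s)}$ are supported on integer multiples of $2^{c_0}$: the leading-term asymptotic only identifies $k_0$, while the full support statement is precisely what makes $2^{-\varphi(s)}$ an ordinary Dirichlet series rather than a generalized series in non-integer log-frequencies. I expect to establish it by exploiting the Dirichlet structure of $n^{-\phi(s)}$ for several $n$ simultaneously, combined with the uniqueness of Dirichlet series expansions; the boundedness $|n^{-\phi(s)}|<n^{-1/2}$ coming from $\re\phi>1/2$ is also likely to play a role. A secondary technical point is the deduction $c_0\in\mathbb{N}_0$ from $2^{c_0},3^{c_0}\in\mathbb{N}$, which relies on the multiplicative independence of $2$ and $3$.
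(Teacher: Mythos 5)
First, note that the paper does not prove this statement at all: it is quoted verbatim from Gordon--Hedenmalm with a citation, so the comparison below is against the known proof (whose key steps the paper does reuse elsewhere, e.g.\ in Lemma~\ref{Comp_Op_monomials}, Lemma~\ref{comp_op_double_Dirichlet_polynomial} and Lemma~\ref{comp_op_Dirichlet_series_over_semigroups}). Your sufficiency sketch and most of your necessity plan follow the standard route: test on the monomials $n^{-s}$, read off $c_0$ from the leading frequency of $n^{-\phi}$, and then show that the coefficients of $2^{-\phi}$ are supported on multiples of $2^{c_0}$ by playing the expansions for several bases against each other. That last step is indeed the delicate one, and your plan for it matches the mechanism of Lemma~\ref{comp_op_Dirichlet_series_over_semigroups} in the paper (coefficient extraction via the mean-value formula applied to $2^{c_0 s}\varphi_2 = 3^{c_0 s}\varphi_3$). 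The sufficiency direction also needs a word about the case $c_0=0$ with $\operatorname{Re}\varphi(+\infty)=\tfrac12$ (one must rule it out, essentially Proposition~\ref{comp_op_range_phi_C_+}, to get a half-plane of absolute convergence for the sum over $n$), but that is a repairable omission.

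The genuine gap is the step ``$2^{c_0}\in\mathbb{N}$ and $3^{c_0}\in\mathbb{N}$ force $c_0\in\mathbb{N}_0$ by multiplicative independence of $2$ and $3$.'' This implication is not a technical point: for irrational $c_0$ it is precisely an open instance of the four exponentials conjecture (take $x_1=\log 2$, $x_2=\log 3$, $y_1=1$, $y_2=c_0$; the conjecture is exactly what would forbid all four of $2,3,2^{c_0},3^{c_0}$ from being algebraic). Multiplicative independence of $2$ and $3$ only handles the rational case, where $2^{p/q}\in\mathbb{N}$ already forces $q=1$. The correct argument, and the one in Gordon--Hedenmalm, uses \emph{all} the test monomials: since $n^{-\phi}\in\mathcal{D}$ for every $n\ge 2$, the same leading-term analysis gives $n^{c_0}\in\mathbb{N}$ for every $n$, and the classical finite-difference argument (the $k$-th difference of $n\mapsto n^{c_0}$ is an integer tending to $0$ for $k>c_0$, hence eventually zero) yields $c_0\in\mathbb{N}_0$ unconditionally. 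Alternatively, three multiplicatively independent bases together with the six exponentials theorem suffice. As written, your necessity argument does not close.
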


\begin{theorem}  \cite[Theorem~B]{Gordon_Hedenmalm99} \label{Composition_Operators_H^2}
An analytic function $ \phi: \mathbb{C}_{\frac{1}{2}} \rightarrow \mathbb{C}_{\frac{1}{2}}$ defines a bounded composition operator $C_{\phi} : \mathcal{H}^2 \rightarrow \mathcal{H}^2$ if and only if
\begin{itemize}
\item[(a)] it is of the form $\phi (s) = c_0 s + \varphi(s)$ with $c_0 \in \mathbb{N}_0$ and $ \varphi \in \mathcal{D}$.
\item[(b)] $\phi$ has an analytic extension to $\C_+$, also denoted by $\phi$, such that
\begin{itemize}
\item [(i)] $\phi(\C_+) \subset \C_+$ if $ 0 < c_0$, and
\item[(ii)] $\phi(\C_+) \subset \C_{\frac{1}{2}}$ if $ c_0=0$.
\end{itemize}
\end{itemize} 
\end{theorem}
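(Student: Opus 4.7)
The plan is to prove a bi-implication, so I will split the argument into the necessity of $(a)$ and $(b)$ from the boundedness of $C_\phi$, and the sufficiency in the converse direction. Throughout, I will use the Hilbert space structure of $\mathcal{H}^2$, its reproducing kernel $K_w(s)=\zeta(s+\bar w)$ on $\mathbb{C}_{1/2}$, and the fact that the family $\{n^{-s}\}_{n\ge 1}$ is an orthonormal basis.

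For the necessity of $(a)$, I would simply apply Theorem~\ref{Composition_Operators_on_H^2} with $\theta=\tfrac12$: since $\mathcal H^2\subset\mathcal D$, the hypothesis that $C_\phi$ maps $\mathcal H^2$ into $\mathcal H^2$ guarantees that it maps $\mathcal H^2$ into $\mathcal D$, and the cited theorem forces $\phi(s)=c_0s+\varphi(s)$ with $c_0\in\mathbb N_0$ and $\varphi\in\mathcal D$. The condition $(b)$ is the delicate part. For the analytic extension, I would apply $C_\phi$ to the test function $f(s)=2^{-s}\in\mathcal H^2$. The image $g(s):=2^{-\phi(s)}=(2^{c_0})^{-s}\cdot 2^{-\varphi(s)}$ must lie in $\mathcal H^2$, and a quick inspection of its Dirichlet coefficients shows that only indices divisible by $2^{c_0}$ are nonzero, whence $2^{-\varphi(s)}$ is itself a Dirichlet series with $\ell^2$ coefficients. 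Combining this with the pointwise bound $|2^{-\varphi(s)}|=2^{-\mathrm{Re}\,\varphi(s)}$ and the inequality $\mathrm{Re}\,\varphi(s)>\tfrac12-c_0\mathrm{Re}(s)$, which follows from $\phi(\mathbb{C}_{1/2})\subset\mathbb{C}_{1/2}$, should yield the desired extension of $\varphi$ (and hence of $\phi$) across the line $\mathrm{Re}\,s=\tfrac12$. The image condition $(b)(i)$--$(ii)$ would then follow by analyticity and Liouville/maximum principle considerations, playing off the asymptotics of $\phi$ as $\mathrm{Re}\,s\to+\infty$ and the two regimes $c_0\ge1$ versus $c_0=0$.

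For the sufficiency, assume $(a)$ and $(b)$. I would first define $C_\phi$ on the dense subspace of finite Dirichlet polynomials $P(s)=\sum_{n=1}^N a_n n^{-s}$, noting that
\[
C_\phi P(s)=\sum_{n=1}^N a_n (n^{c_0})^{-s}\, n^{-\varphi(s)},
\]
so $C_\phi P$ is the product of a genuine Dirichlet polynomial in the variable $s$ (through the factor $(n^{c_0})^{-s}$) with the factors $n^{-\varphi(s)}$; each of these is a Dirichlet series in $\mathcal H^\infty$ thanks to $(b)$ (in particular $\mathrm{Re}\,\varphi(s)>\tfrac12$ when $c_0=0$, and $\mathrm{Re}\,\phi(s)>0$ on $\mathbb{C}_+$ when $c_0\ge1$). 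Hence $C_\phi P\in\mathcal H^2$. The operator norm estimate $\|C_\phi P\|_{\mathcal H^2}\le M\|P\|_{\mathcal H^2}$ should be obtained via the reproducing-kernel identity $C_\phi^*K_w=K_{\phi(w)}$ applied to the closure of the span of kernels, together with the bound $\zeta(2\mathrm{Re}\,\phi(w))\le C\,\zeta(2\mathrm{Re}\,w)$ coming from the mapping properties in $(b)$. Once boundedness is established on polynomials, extension by density to all of $\mathcal H^2$ is routine.

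The main obstacle will be the necessity of $(b)$: upgrading the data ``\,$\phi$ is defined on $\mathbb{C}_{1/2}$ and $C_\phi$ is bounded'' to an \emph{analytic extension} across $\mathrm{Re}\,s=\tfrac12$. A bare Dirichlet series in $\mathcal H^2$ need not extend beyond its half-plane of convergence, so the extra information must be extracted from the specific form $\phi=c_0s+\varphi$ plus the $\ell^2$ structure of the coefficients of $n^{-\phi(s)}$ for varying $n$. A second subtle point is distinguishing the two regimes $c_0=0$ and $c_0\ge1$ in the image condition, where the shift between $\mathbb{C}_+$ and $\mathbb{C}_{1/2}$ exactly compensates the linear drift of $c_0 s$, and this has to be verified by following the asymptotic behavior of $\phi$ at infinity together with the Bohr-type bounds for members of $\mathcal D$.
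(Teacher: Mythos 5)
This theorem is not proved in the paper at all: it is quoted verbatim as Theorem~B of \cite{Gordon_Hedenmalm99}, so there is no in-paper argument to compare yours against; I can only measure your sketch against the actual proof in that reference. Your necessity of (a) is fine and is exactly the standard reduction to Theorem~\ref{Composition_Operators_on_H^2}, and your observation that the Dirichlet coefficients of $2^{-\phi}$ are supported on multiples of $2^{c_0}$, so that $2^{-\varphi}\in\mathcal H^2$, is a correct and genuinely used ingredient.

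The two remaining steps each contain a real gap. For the necessity of (b), a pointwise lower bound such as $\re \varphi(s)>\tfrac12-c_0\re s$ on $\C_{1/2}$ cannot by itself produce an analytic continuation of $\varphi$ across the line $\re s=\tfrac12$: a lower bound on the real part of a function on a half-plane says nothing about extending the function beyond that half-plane. What Gordon--Hedenmalm actually do is exploit that $\varphi$ is a somewhere-convergent Dirichlet series all of whose exponentials $n^{-\varphi}$ lie in $\mathcal H^2$, combine this with the positivity of $\re\varphi$ (their Proposition~4.2, the one-variable ancestor of Lemma~\ref{com_op_positive_range_varphi_double} here) and with vertical-limit and normal-family arguments through the Bohr correspondence to push the domain of convergence of $\varphi$ out to $\C_+$; that machinery is the heart of the matter and is absent from your sketch (you flag the difficulty, but the mechanism you propose in its place does not work). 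For the sufficiency, the reproducing-kernel route is also flawed: the identity $C_\phi^*K_w=K_{\phi(w)}$ presupposes that $C_\phi$ is already bounded, and the scalar inequality $\zeta(2\re\phi(w))\le C\,\zeta(2\re w)$ is only a necessary condition --- a bound on each individual $\lVert K_{\phi(w)}\rVert$ does not control $\lVert\sum_j c_jK_{\phi(w_j)}\rVert$ in terms of $\lVert\sum_j c_jK_{w_j}\rVert$, which is what boundedness of $C_\phi^*$ on the span of kernels would require. The actual sufficiency proof expresses the $\mathcal H^2$-norm as a limit of integral means via Carlson's identity and then runs a Littlewood-type subordination argument on half-planes, treating $c_0\ge 1$ and $c_0=0$ separately; some such subordination input is unavoidable and is missing here.
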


Here it should be understood that $\varphi$ is a Dirichlet series which converges in some half-plane and has an analytic extension, also denoted by $\varphi$, defined in $\C_\theta$ for Theorem~\ref{Composition_Operators_on_H^2} and in $\C_\frac{1}{2}$ for Theorem~\ref{Composition_Operators_H^2}. The same applies for the results that appear throughout this work, where $\varphi$ will be a Dirichlet series convergent in a certain half-plane with an analytical extension to the domain of $\phi$. This work was extended to the whole scale of Hardy spaces of Dirichlet series in   \cite{Bayart02}.\\

In this paper, we are mostly interested in composition operators on $\HC$, the space of Dirichlet series that converge on some half-plane and that can be extended as a bounded holomorphic function
to $\mathbb{C}_{+}$ (see \cite[Chapter~6]{Queffelec13}) or, equivalently, of those Dirichlet series that converge on $\mathbb{C}_{+}$ to a bounded holomorphic function
(see \cite[Theorem~1.13]{DeGaMaSe18}). This space is endowed with the norm $\Vert D \Vert_{\infty} = \sup_{s \in \mathbb{C}_{+}} \vert D(s) \vert$.
It has been shown in \cite{Bayart02} that an analytic function $\phi:\CC_+\to\CC_+$ generates a composition operator $C_\phi:\HC\to\HC$ if and only if it writes $\phi(s)=c_0s+\varphi(s)$
with $c_0\in\NN_0$ and $\varphi\in\mathcal D$.

Our aim is to extend this result to double Dirichlet series and to characterize the composition operators on the space $\HCdos$, introduced in \cite{Nuestro} and whose definition we recall now. 
A double series $\sum_{m,n}a_{m,n}$ is \emph{regularly convergent} if it is convergent and if, for all $m_0$ and $n_0$, the series $\sum_n a_{m_0,n}$ and $\sum_m a_{m,n_0}$ are convergent.
Then, the space $\HCdos$ is defined as the space of double Dirichlet series $\sum_{m,n=1}^{+\infty}a_{m,n}m^{-t}n^{-s}$ that are regularly convergent on $\C_+^2$ to a bounded holomorphic function
(the norm being defined as the supremum on $\mathbb{C}_{+}^{2}$). With this we can state the main result of this paper.
It is an the analogue of the quoted result of \cite{Bayart02} in the double case. Note here that the hypothesis of analyticity of the symbol is also dropped.

\begin{theorem} \label{rural}
Consider a function $\phi: \C_+^2 \rightarrow \C_+^2$, $\phi=(\phi_1, \phi_2)$. Then $C_\phi : \HCdos \rightarrow \HCdos$ is a composition operator if and only if, for $j=1,2$ and $(s,t) \in \C_+^2$ we have
\begin{equation} \label{quinquennal}
\phi_j(s,t) = c_0^{(j)}s + d_0^{(j)} t + \varphi_j(s,t) \,,
\end{equation}
where 
\[
c_0^{(j)}, d_0^{(j)} \in \N_0
\] 
and 
\begin{gather*}
\varphi_j:\C_+^2 \rightarrow \C \\
\varphi_j(s,t) = \sum_{m,n=1}^\infty \frac{d_{m,n}^{(j)}}{m^s n^t}
\end{gather*}
is a double Dirichlet series that converges regularly and uniformly in $\C_\e^2$ for every $\e >0$.
\end{theorem}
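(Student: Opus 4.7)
The argument splits into sufficiency and necessity, and necessity carries most of the weight. For sufficiency, given $\phi_j(s,t)=c_0^{(j)}s+d_0^{(j)}t+\varphi_j(s,t)$ as in \eqref{quinquennal}, the plan is to verify directly that $D\circ\phi\in\HCdos$ for each $D\in\HCdos$, reducing by linearity and approximation by Dirichlet polynomials to the case of a single monomial $m^{-\phi_2(s,t)}n^{-\phi_1(s,t)}$. For such a monomial one factors out the linear parts as a plain Dirichlet monomial in $(s,t)$, expands each remaining factor $e^{-\varphi_j(s,t)\log k}$ via its power series in $\varphi_j$, and rearranges the resulting expression into a regularly and uniformly convergent double Dirichlet series on $\C_\e^2$, exploiting the uniform convergence of the $\varphi_j$ on that set.

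For necessity, the plan follows the scheme of Gordon--Hedenmalm and Bayart, extended to two variables. First, the Dirichlet monomials $p^{-s}$ and $p^{-t}$ belong to $\HCdos$ for every $p\ge 2$, so applying $C_\phi$ gives $p^{-\phi_1},\,p^{-\phi_2}\in\HCdos$. Fixing $t_0\in\C_+$ and summing out the $t$-variable in the regularly convergent double series for $p^{-\phi_1}$ yields a Dirichlet series expression for $p^{-\phi_1(\cdot,t_0)}\in\HC$. Because $p^{-\phi_1}$ is holomorphic and non-vanishing on $\C_+^2$, $\phi_1$ is itself holomorphic, and the one-dimensional theorem of \cite{Bayart02} applied to the slice $\phi_1(\cdot,t_0):\C_+\to\C_+$ gives
\[
\phi_1(s,t_0)=c_0^{(1)}(t_0)\,s+\tilde\varphi_{t_0}(s),\quad c_0^{(1)}(t_0)\in\N_0,\ \tilde\varphi_{t_0}\in\mathcal D.
\]
The integer $c_0^{(1)}(t_0)=\lim_{\sigma\to+\infty}\phi_1(\sigma,t_0)/\sigma$ depends continuously on $t_0$ and is therefore constant on the connected set $\C_+$. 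The symmetric analysis in $t$ produces a constant $d_0^{(1)}\in\N_0$, and setting $\varphi_1(s,t):=\phi_1(s,t)-c_0^{(1)}s-d_0^{(1)}t$ (and analogously for $\phi_2$) completes this part of the reduction.

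It remains to show that $\varphi_1$ is a double Dirichlet series converging regularly and uniformly on every $\C_\e^2$. Writing $F_p(s,t):=p^{-\phi_1(s,t)}=\sum_{m,n}B_{m,n}^{(p)}m^{-s}n^{-t}\in\HCdos$ and comparing this expansion slice by slice with the factorization $p^{-\phi_1(s,t_0)}=(p^{c_0^{(1)}})^{-s}\,p^{-\tilde\varphi_{t_0}(s)}$, uniqueness of Dirichlet coefficients forces $B_{m,n}^{(p)}=0$ unless $p^{c_0^{(1)}}\mid m$ and, symmetrically, $p^{d_0^{(1)}}\mid n$. Re-indexing then identifies $p^{-\varphi_1(s,t)}$ with a regularly convergent double Dirichlet series on $\C_+^2$ whose constant term $p^{-d_{1,1}^{(1)}}$ is nonzero. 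Choosing a single-valued branch of the logarithm and expanding $\log(1+Y_p)$ as a power series in the no-constant-term part $Y_p$ then expresses $\varphi_1$ itself as a double Dirichlet series, with its uniform convergence on $\C_\e^2$ inherited from the corresponding property of $F_p$.

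The main obstacle I expect is this last stage: converting the pointwise regular convergence of $F_p$ on $\C_+^2$ into the uniform regular convergence of the double Dirichlet series of $\varphi_1$ on every $\C_\e^2$. The logarithmic expansion is automatic in some half-plane $\C_{\e_0}^2$ where $|Y_p|<1$ uniformly, and propagating uniform control down to arbitrarily small $\e>0$ will require a bootstrapping argument combining the uniform convergence of double Dirichlet series in $\HCdos$ on half-planes $\C_\e^2$ with the term-by-term estimates coming from the $\log(1+\cdot)$ expansion.
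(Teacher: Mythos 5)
Your overall scheme is the paper's: slice the symbol, apply the one-variable theorem of \cite{Bayart02} to each slice, and reassemble; and for sufficiency, expand $k^{-\varphi_j}$ exponentially and pass to general $D$ by partial sums. Your $\log(1+Y_p)$ re-indexing is in fact a legitimate alternative to the paper's coefficient-by-coefficient induction (which compares the expansions for $k=2$ and $k=3$ to strip off the $k^{d_0t}$ factor), modulo the usual $2\pi i\Z/\log p$ branch constant, which is harmless. However, two steps need more than you give them. First, the assertion that $t_0\mapsto c_0^{(1)}(t_0)=\lim_{\sigma\to+\infty}\phi_1(\sigma,t_0)/\sigma$ ``depends continuously on $t_0$'' does not follow from the formula: it is a pointwise limit of continuous functions, and nothing you have established makes the remainders $\tilde\varphi_{t_0}(\sigma)$ locally uniformly bounded in $t_0$. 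This is repairable (e.g.\ the maps $t\mapsto\phi_1(\sigma,t)/\sigma$ send $\C_+$ into $\C_+$, so form a normal family whose locally uniform subsequential limit must coincide with the integer-valued pointwise limit, forcing it to be a constant holomorphic function), but some argument is required; the paper instead proves constancy only on a smaller half-plane $\overline{\C_{\sigma_0}}$ by locating the first row series $\alpha^{(k)}_{k^{c_0}}$ that is not identically zero and invoking \cite[Lemma~3.1]{Gordon_Hedenmalm99}, recovering the global identity \eqref{quinquennal} on $\C_+^2$ by analytic continuation at the very end. Your divisibility claim for the $B^{(p)}_{m,n}$ depends on this constancy, so the gap propagates.

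The genuinely missing idea is in the last stage, which you flag as the main obstacle but for which your proposed route would fail. The $\log(1+Y_p)$ expansion, and any term-by-term estimate derived from it, lives only on a half-plane $\C_\sigma^2$ where $|Y_p|<1$; near the boundary of $\C_+^2$ this bound is simply unavailable, so no bootstrapping of those estimates can reach an arbitrary $\C_\e^2$. The mechanism that works is of a different nature: one first shows $\re\varphi_j\geq 0$ on all of $\C_+^2$ (apply \cite[Proposition~4.2]{Gordon_Hedenmalm99} in each variable separately, as in Lemma~\ref{com_op_positive_range_varphi_double}), so that $u=\re\bigl(\varphi_j^{1/2}\bigr)$ is nonnegative and, unless $\varphi_j$ is constant, a positive harmonic function of each variable; Harnack's inequality on half-planes, applied successively in $s$ and in $t$ as in \cite[Section~3]{Queffelec15}, transports the bound for $u$ on $\overline{\C_\sigma^2}$ (where $\varphi_j$ converges absolutely) down to $\C_\e^2$ at the cost of a factor $(\sigma/\e)^2$, whence $\varphi_j$ is uniformly bounded on $\C_\e^2$ and Bohr's theorem for double Dirichlet series upgrades convergence to uniform convergence there. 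Without the positivity of $\re\varphi_j$ and the Harnack step the theorem as stated is out of reach. A smaller remark on sufficiency: Dirichlet polynomials are not norm-dense in $\HCdos$, so your ``approximation by Dirichlet polynomials'' must mean uniform convergence of the partial sums on $\C_\delta^2$, and transporting that through $\phi$ requires $\phi(\C_\e^2)\subset\C_\delta^2$, i.e.\ the analogue of Proposition~\ref{comp_op_range_phi_C_+}, which in the paper rests on the strict maximum principle of Lemma~\ref{sup_MDS_strictly_decreasing}; this ingredient is absent from your plan.
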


The proof of this theorem will be divided into two parts. The proof that symbols satisfying \eqref{quinquennal} leads to composition operators on $\HCdos$ will follow the arguments of
the one-variable case, with extra difficulties since we are working with double Dirichlet series and we have to be very careful with the convergence of the series.
In particular, we will need the latest results about the range of the symbol of a composition operator of $\HC$, which are developed in \cite[Section~3]{Queffelec15}. The proof
of the converse part will require really new arguments, which were not necessary in the one-dimensional case (see in particular the proof of the forthcoming Theorem \ref{Comp_op_HCdos_necessity}.

A crucial fact in the theory of Dirichlet series is that Dirichlet series and formal power series (or functions) in infinitely many variables are related through the Bohr transform $\mathcal{B}$. 
Let us briefly recall how this identification is done. Each formal power series $\sum_{\alpha}a_\alpha z^\alpha$ (where $\alpha$ runs on the set of eventually zero sequences of non-negative integers) is mapped onto the Dirichlet series $\sum_{\alpha}a_\alpha(p_1^{\alpha_1}\cdots p_r^{\alpha_r})^{-s}$, where $(p_j)$ is the sequence of prime numbers. This defines an isometric isomorphism between $H^\infty(B_{c_0})$ (the space of bounded holomorphic functions on $B_{c_{0}}$, the open unit ball of $c_{0}$) and $\HC$ (this was proved in \cite{Hedenmalm_Lindqvist_Seip_97}, see also \cite[Theorem~3.8]{DeGaMaSe}) and between $H^p(\T^\infty)$ and $\mathcal{H}^p$ for every $1 \leq p \leq \infty$ \cite[Theorem~2]{Bayart02} (see also \cite[Proposition~6.53]{Queffelec13} or \cite[Chapter~11]{DeGaMaSe}). With this idea, it was shown in \cite{Bayart02} that, for $1 \leq p < \infty$  composition operators on the Hardy space $\mathcal{H}^p$ induce composition operators on $H^{p}(\mathbb{T}^{\infty})$. \\ 
This identification was extended to the double case in \cite[Theorem~3.5]{Nuestro}.
All this allows us to show (see Propositions~\ref{Comp_op_H_infty_B_c_0} and~\ref{Proposition_comp_ops_HCdos_H(B_c_0dos)}) that composition operators on $\HCK$ (for $k=1,2$) induce composition operators on $H^\infty(B_{c_0^{k}})$.

As a final note we will deal with superposition operators, since it is interesting to mention how there is a significant difference between the case of $\HC$ and the case of the spaces $\mathcal{H}^p$.

\section{Revisiting composition operators on $\HC$} \label{section_revisiting}

In this section, which is mostly expository, we revisit several arguments of the proof of \cite[Theorem~A]{Gordon_Hedenmalm99} and of the characterization of composition operators
on $\HC$. In particular, we clear some details in the proof that were implicit in the original papers and we improve the results by dropping the a priori assumption
of analyticity. All these results will be useful to study the double Dirichlet series case.

\begin{lemma} \label{Comp_Op_monomials}
Suppose that $\phi: \C_+ \rightarrow \C$ is a function such that $k^{-\phi} \in \HC$ for every $k \in \N$. Then $\phi$ is analytic in $\C_+$ and there exist $c_0 \in \N_0$, $\varphi\in\mathcal D$
such that $\varphi$ extends to $\CC_+$ and $\phi(s)=c_0 s + \varphi(s)$ for all $s \in \C_+$.
\end{lemma}

\begin{proof}
First let us prove that $\phi$ is analytic. By hypothesis, $D(s) = \frac{1}{2^{\phi(s)}} \in \HC$ with $|D(s)| > 0$ for every $s \in \C_+$.
Then there exists $L: \C_+ \rightarrow \C$ a holomorphic logarithm of $D$,
that is, $D(s)=e^{L(s)}$ for every $s \in \C_+$. As we have $e^{L(s)}= D(s) = e^{- (\log 2) \phi(s)}$,  there exists a function $k: \C_+ \rightarrow \Z$
such that $-(\log 2) \phi(s) = L(s) + 2k(s) \pi i$. Analogously for $\tilde{D}(s) = \frac{1}{3^{\phi(s)}}$, 
there exists $\tilde{L}: \C_+ \rightarrow \C$ a holomorphic logarithm of $\tilde{D}$ and a function $\tilde{k}: \C_+ \rightarrow \Z$ such that
$-(\log 3) \phi(s) = \tilde{L}(s) + 2\tilde{k}(s) \pi i$. Therefore, for every $s \in \C_+$,
$$ \frac{1}{-\log 2} (L(s) + 2k(s) \pi i) = \frac{1}{-\log 3} (\tilde{L}(s) + 2\tilde{k}(s) \pi i),$$
so, if $h : \C_+ \rightarrow \C$ is defined as
$$ h(s) = \frac{1}{2 \pi i} \left ( \frac{L(s)}{\log 2} - \frac{\tilde{L}(s)}{\log 3} \right ) = \frac{\tilde{k}(s)}{\log 3} - \frac{k(s)}{\log 2} \in \frac{1}{\log 2} \Z + \frac{1}{\log 3} \Z, $$
then $h(\C_+)$ is a countable set and by the open mapping property, $h$ is a constant function. Now suppose $\tilde{k}$ is not constant and take $s_1, s_2 \in \C_+$ such that $\tilde{k}(s_1) \neq \tilde{k}(s_2)$. Since $h(s_1) = h(s_2)$,
$$ \frac{\tilde{k}(s_1)}{\log 3} - \frac{k(s_1)}{\log 2} =  \frac{\tilde{k}(s_2)}{\log 3} - \frac{k(s_2)}{\log 2},$$
so
$$\frac{\log 3}{\log 2} = \frac{k(s_1) - k(s_2)}{\tilde{k}(s_1) -\tilde{k}(s_2)} \in \Q,$$
a contradiction. Therefore, $\tilde{k}$ is constant and $\phi(s) = \frac{1}{-\log 3} (\tilde{L}(s) + 2\tilde{k} \pi i)$ for all $s \in \C_+$, so $\phi$ is analytic in $\C_+$.

Now, note that in the proof of the necessity of \cite[Theorem~A]{Gordon_Hedenmalm99} the only hypothesis that is really used is that $k^{-\phi} \in \mathcal{D}$ for all $k \in \N$. 
Since the assumptions of the present lemma imply this, the proof now follows from the application of \cite[Theorem~A]{Gordon_Hedenmalm99}.
\end{proof}

\begin{remark}
Lemma~\ref{Comp_Op_monomials} can actually be stated with weaker hypothesis with essentially the same proof. It is enough to assume that $\phi : \C_\theta \rightarrow \C$ (for $\theta \geq 0$) is a function such that $k^{-\phi} \in \mathcal{D}$ for every $k \in \N$ to get that $\phi(s) = c_0 s + \varphi(s)$ for all $s \in \C_\theta$, with $c_0 \in \N$ and $\varphi \in \mathcal{D}$.
\end{remark}

The next result is a particular case of \cite[Proposition~4.2]{Gordon_Hedenmalm99}.

\begin{proposition} \label{comp_op_range_phi_C_+}
Suppose $\phi: \C_+ \rightarrow \C_+$ is a holomorphic mapping of the form $\phi(s)=c_0s + \varphi(s)$ for some $c_0 \in \N_0$ and such that $\varphi$ can be represented as a Dirichlet series that converges on $\C_\sigma$ for some $\sigma > 0$. Then for every $\e >0$ there exists some $\delta > 0$ such that $\phi(\C_{\e}) \subset \C_{\delta}$.
\end{proposition}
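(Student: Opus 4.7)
The plan relies on the Riesz--Herglotz representation of positive harmonic functions on $\C_+$. Since $\phi(\C_+)\subset\C_+$, the function $u:=\re\phi$ is positive and harmonic on $\C_+$, so it admits a representation
\begin{equation*}
u(x+iy)=\alpha x+\int_{\R}P_x(y-t)\,d\mu(t),
\end{equation*}
where $\alpha\geq 0$, $\mu$ is a positive Borel measure on $\R$ with $\int(1+t^2)^{-1}d\mu(t)<\infty$, and $P_x(y)=x/[\pi(x^2+y^2)]$ is the Poisson kernel of $\C_+$. The coefficient $\alpha$ equals $\lim_{x\to\infty}u(x+iy)/x$. Using that $\varphi$ converges absolutely on $\C_{\sigma+1}$, so that $\varphi(s)\to a_1$ (the constant term) as $\re s\to\infty$ uniformly in $\im s$, this limit is $c_0$. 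Hence $\alpha=c_0$ and
\begin{equation*}
\re\phi(s)=c_0\re s+\int_{\R}P_{\re s}(\im s-t)\,d\mu(t)\geq c_0\re s \qquad \text{on } \C_+.
\end{equation*}

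If $c_0\geq 1$ the conclusion is immediate: $\re\phi(s)\geq c_0\re s\geq c_0\e\geq\e$ on $\C_\e$, and we may take $\delta:=\e$.

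If $c_0=0$, one needs a strictly positive lower bound for the Poisson integral $\int P_{\re s}(\im s-t)\,d\mu(t)$ on $\C_\e$. I would first establish that $\re a_1>0$: from $\re\varphi\geq 0$ on $\C_+$ and $\varphi(s)\to a_1$ at infinity we get $\re a_1\geq 0$, and the degenerate case $\re a_1=0$ is ruled out by combining the open mapping theorem applied to the nonconstant Dirichlet series $\varphi:\C_+\to\C_+$ with the fact that a strictly positive real part cannot coexist with an imaginary limit value at infinity. Absolute convergence then yields, for $R$ large enough, $|\varphi(s)-a_1|<\re a_1/2$ on $\C_R$, hence $\re\phi\geq\re a_1/2$ there. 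On the remaining strip $\{\e\leq\re s\leq R\}$ one argues by contradiction: a sequence $(s_n)$ in the strip with $\re\phi(s_n)\to 0$ either admits a subsequence with bounded imaginary parts, producing a limit point $s_0\in\C_\e$ with $\re\phi(s_0)=0$ (contradicting $\phi(s_0)\in\C_+$), or else $|\im s_n|\to\infty$; in the latter case, the Bohr almost periodicity of $\varphi$ on any half-plane above its abscissa of uniform convergence produces vertical translates $\tau_n$ with $\varphi(s_n+i\tau_n)\approx\varphi(s_n)$ and $\im(s_n+i\tau_n)$ bounded, reducing to the first sub-case.

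The principal obstacle lies in this last step when $\e$ is smaller than the abscissa of uniform convergence $\sigma_u$ of $\varphi$, since the Bohr almost periodicity needed is directly available only on $\C_\theta$ with $\theta>\sigma_u$. This gap would be bridged by combining the almost-periodic control on $\{\sigma_u<\re s\leq R\}$ with a maximum-principle argument for the subharmonic function $-\re\phi$ on the full strip $\{\e\leq\re s\leq R\}$, transferring the positive lower bound from $\re s=\sigma_u+1$ and $\re s=R$ down to $\re s=\e$.
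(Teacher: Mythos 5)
The paper offers no proof of this proposition at all: it is quoted as a particular case of \cite[Proposition~4.2]{Gordon_Hedenmalm99}, and when the analogous two-variable statement is needed (Lemma~\ref{range_phi_comp_op_double}) the authors argue quite differently, via the strict decrease over nested half-planes of the supremum of the nonconstant bounded Dirichlet series $2^{-\varphi}$. Your Herglotz route is therefore a genuinely independent attempt. Its first half is fine: the identification $\alpha=c_0$ (using that $\re\varphi$ is bounded on $\C_{\sigma+1}$ by absolute convergence) and the resulting bound $\re\phi(s)\geq c_0\re s$ settle the case $c_0\geq 1$.

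For $c_0=0$ there are two genuine gaps. First, the justification of $\re a_1>0$ is not a proof: the principle that ``a strictly positive real part cannot coexist with an imaginary limit value at infinity'' is false for general holomorphic self-maps of $\C_+$ (e.g.\ $s\mapsto 1/(s+1)$), so the Dirichlet structure must actually be used. If $\re a_1=0$ and $k\geq 2$ is the first index with $a_k\neq 0$, then $\varphi(s)-a_1=a_k k^{-s}(1+o(1))$ as $\re s\to\infty$, and choosing $\im s$ so that $a_k k^{-s}$ is a negative real number forces $\re\varphi(s)<0$; this is precisely the leading-coefficient argument the paper records in the remark following Lemma~\ref{sup_MDS_strictly_decreasing}. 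Second, and more seriously, the proposed bridge for $\e<\sigma_u$ cannot work as described: the maximum principle for $-\re\phi$ transfers bounds from the boundary of a region to its interior, so knowing $\re\phi\geq c$ on $\re s=\sigma_u+1$ and on $\re s=R$ controls only the substrip between those two lines and says nothing about the line $\re s=\e$, which lies outside it. The repair is already contained in your own setup: the Herglotz formula yields the Harnack inequality $u(x_1+iy)\geq \frac{x_1}{x_2}\,u(x_2+iy)$ for $0<x_1\leq x_2$ (compare Poisson kernels, $P_{x_1}(r)/P_{x_2}(r)=\frac{x_1(x_2^2+r^2)}{x_2(x_1^2+r^2)}\geq x_1/x_2$), which is the same inequality the paper invokes as \eqref{harnack} in the proof of Theorem~\ref{rural}. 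Once $R$ is chosen so that $\re\varphi\geq \re a_1/2$ on $\C_R$ (absolute convergence), Harnack gives $\re\varphi(x+iy)\geq \frac{x}{R}\,\re\varphi(R+iy)\geq \frac{\e}{R}\cdot\frac{\re a_1}{2}$ for all $\e\leq x\leq R$, so one may take $\delta=\frac{\e\,\re a_1}{2R}$ and the entire almost-periodicity and strip discussion becomes unnecessary.
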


Our next theorem improves a result stated by the first author in \cite{Bayart02}, since we remove
the assumption of analyticity of the symbol. We give a detailed proof for the sake of completeness as it will be useful later on.

\begin{proposition} \label{Composition_Operators_on_H_inf(C_+)}
A symbol $ \phi: \mathbb{C}_+ \rightarrow \mathbb{C}_+$ generates a composition operator $C_{\phi} : \mathcal{H}^\infty (\mathbb{C}_+) \rightarrow \mathcal{H}^\infty (\mathbb{C}_+)$ 
if and only if it is analytic and, for all $s \in \C_+$, $\phi (s) = c_0 s + \varphi(s)$ with $c_0 \in \mathbb{N}_0$ and $\varphi \in \mathcal{D}$.
\end{proposition}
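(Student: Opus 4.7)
The plan is to split the argument into the straightforward necessity and the more delicate sufficiency. For the ``only if'' direction I would reduce at once to Lemma~\ref{Comp_Op_monomials}. Since $k^{-s}$ defines an element of $\HC$ of norm $1$ for every $k\in\N$, the hypothesis that $C_\phi$ maps $\HC$ into itself yields
\[
k^{-\phi(s)} = C_\phi(k^{-s}) \in \HC \subset \mathcal{D}
\]
for every $k\in\N$. This is exactly the hypothesis of Lemma~\ref{Comp_Op_monomials}, which simultaneously provides the analyticity of $\phi$ on $\C_+$ and the representation $\phi(s)=c_0 s+\varphi(s)$ with $c_0\in\N_0$ and $\varphi\in\mathcal{D}$.

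For the ``if'' direction, fix a symbol $\phi$ of the prescribed form and an arbitrary $D\in\HC$ with Dirichlet expansion $D(w)=\sum_{n\geq 1}a_n/n^w$. Since $\phi(\C_+)\subset\C_+$ and $D$ is a bounded holomorphic function on $\C_+$, the composition $D\circ\phi$ is automatically bounded and holomorphic on $\C_+$ with $\|D\circ\phi\|_\infty\le\|D\|_\infty$. The only remaining point is to produce a Dirichlet expansion for $D\circ\phi$ on some right half-plane. I would pick $\varepsilon>0$ small enough that $\varphi$ converges absolutely on $\C_\varepsilon$; Proposition~\ref{comp_op_range_phi_C_+} then provides $\delta>0$ with $\phi(\C_\varepsilon)\subset\C_\delta$, and by shrinking $\varepsilon$ further if necessary we may assume that $\delta$ exceeds the abscissa of absolute convergence of $D$.

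On $\C_\varepsilon$ we then have
\[
D(\phi(s))=\sum_{n\geq 1}a_n\, n^{-c_0 s}\, n^{-\varphi(s)}.
\]
Writing $\varphi(s)=b_1+\psi(s)$ where $\psi(s)=\sum_{k\geq 2}b_k/k^s$ has zero constant term, each factor splits as $n^{-\varphi(s)}=n^{-b_1}e^{-(\log n)\psi(s)}$. By the standard fact that the exponential of a Dirichlet series without constant term is again a Dirichlet series, this yields an expansion $n^{-\varphi(s)}=n^{-b_1}\sum_{k\geq 1}c_k^{(n)}/k^s$ valid on $\C_\varepsilon$. Inserting this into the previous display and regrouping by the Dirichlet denominators $n^{c_0}k$ produces a Dirichlet series coinciding with $D\circ\phi$ on a suitable right half-plane. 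The identification of $\HC$ with the bounded holomorphic functions on $\C_+$ which admit a Dirichlet expansion on some $\C_\eta$, recalled in the introduction, then closes the argument.

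The main obstacle is precisely the rearrangement in the last step, which demands absolute convergence of the double sum on some common right half-plane. This requires uniform-in-$n$ control of the coefficients $c_k^{(n)}$ coming from the expansion of $n^{-\varphi(s)}$, and it is here that the strict gap between $\delta$ and the abscissa of absolute convergence of $D$, provided by Proposition~\ref{comp_op_range_phi_C_+}, is essential. Once this bookkeeping is carried out the argument follows the lines of \cite{Bayart02}, the only genuine novelty being that the analyticity of $\phi$ now emerges as a consequence rather than being assumed from the outset.
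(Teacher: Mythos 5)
Your necessity argument is exactly the paper's: apply $C_\phi$ to the monomials $k^{-s}\in\HC$ and invoke Lemma~\ref{Comp_Op_monomials}, which delivers both the analyticity and the form $\phi(s)=c_0s+\varphi(s)$. The sufficiency is where you diverge from the paper, and there is a genuine gap there. Your plan rests on the claim that, after adjusting $\e$, Proposition~\ref{comp_op_range_phi_C_+} gives $\delta>\sigma_a(D)$, the abscissa of absolute convergence of $D$; this is what is supposed to license the absolutely convergent regrouping of the double sum. But the proposition only asserts the existence of \emph{some} $\delta>0$, and shrinking $\e$ can only shrink $\delta$, not enlarge it. Worse, no choice of $\e$ can work in general: take $c_0=0$ and $\varphi(s)=\tfrac14+\tfrac18\,2^{-s}$, so that $\phi(\C_+)$ is contained in $\{\re z<\tfrac38\}$, while $D\in\HC$ may have $\sigma_a(D)=\tfrac12$. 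Then $\sum_n|a_n|\,n^{-\re\phi(s)}$ diverges for every $s$, and in the regrouped series the coefficient of $k^{-s}$ is the \emph{infinite} sum $\sum_n a_n n^{-b_1}c_k^{(n)}$ (since $n^{c_0}k=k$ for all $n$ when $c_0=0$), whose absolute convergence is precisely what fails. Your bookkeeping does go through when $c_0\geq 1$, because then $n^{-c_0\sigma}$ supplies the decay in $n$ (together with $|a_n|\leq\|D\|_\infty$ and the bound $\sum_k|c_k^{(n)}|k^{-\sigma}\leq n^{\sum_{k\geq2}|b_k|k^{-\sigma}}$), but the case $c_0=0$ is left open.

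The paper circumvents this entirely by never substituting $\phi$ into the full series $D$. It truncates: for the partial sums $D_n$ (Dirichlet polynomials), $D_n\circ\phi$ is a Dirichlet series by Theorem~\ref{Composition_Operators_on_H^2}; since $D_n\to D$ uniformly on $\C_\delta\supset\phi(\C_\e)$ (Bohr's theorem, using only the abscissa of \emph{uniform} convergence, which is $\leq 0$ for $D\in\HC$), the functions $D_n\circ\phi$ converge uniformly on $\C_\e$, so their coefficients $b_k^{(n)}$ satisfy $|b_k^{(n)}-b_k^{(m)}|\leq k^{\e}\|D_n\circ\phi-D_m\circ\phi\|_{\C_\e}$ and form Cauchy sequences with limits $b_k$, $|b_k|\leq Ck^{\e}$. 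The series $F=\sum_k b_k k^{-s}$ then converges absolutely on $\C_{1+\e}$ and agrees there with $D\circ\phi$, and Bohr's theorem upgrades $F$ to an element of $\HC$ equal to $D\circ\phi$. If you want to keep your direct-substitution route, you would need to replace the absolute-convergence regrouping by an argument of this approximation-and-coefficient-extraction type, at least when $c_0=0$.
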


\begin{proof}
The necessity follows from Lemma~\ref{Comp_Op_monomials} and the fact that $\frac{1}{k^s} \in \HC$ for all $k \in \N$.
For the sufficiency, take $D \in \HC$ given by $D(s) = \sum_{k=1}^\infty \frac{a_k}{k^s}$ and $ \phi: \mathbb{C}_+ \rightarrow \mathbb{C}_+$ defined as $\phi (s) = c_0 s + \varphi(s)$ with $c_0 \in \mathbb{N}_0$ 
and $ \varphi \in \mathcal{D}$, and fix $\e > 0$. Consider $D_n(s) = \sum_{k=1}^n \frac{a_k}{k^s}$. By Theorem~\ref{Composition_Operators_on_H^2}, $D_n \circ \phi$ is a Dirichlet series that
converges in some half-plane. Moreover, by Proposition~\ref{comp_op_range_phi_C_+}, given $\e > 0$ there exists some $\delta > 0$ such that $\phi(\C_\e) \subset \C_\delta$ and therefore the sequence $\{D_n \circ \phi\}_n$ is uniformly bounded on $\C_\e$, say by $C$, as there it converges uniformly to $D \circ \phi$.
Write $(D_n \circ \phi)(s)= \sum_{k=1}^\infty \frac{b_k^{(n)}}{k^s}$. For a fixed $k$, the sequence $\{b_k^{(n)}\}_n$ is bounded by $C k ^\e$ and it is a Cauchy sequence since
\[
|b_k^{(n)}-b_k^{(m)}|
\leq k^{\e} \sup_{s \in \mathbb{C}_{\varepsilon}} \vert D_n\circ\phi (s) - D_m\circ\phi (s) \vert\,.
\] 
Therefore it converges to some $b_k \in \C$ with $|b_k| \leq C k^\e$. Defining $F(s)=\sum_{k=1}^\infty \frac{b_k}{k^s}$, since $|b_k| \leq C k^{\e}$, $F(s)$ converges absolutely in $\C_{1 + \e}$, and it is clear that in that half-plane the sequence $\{D_n \circ \phi\}_n$ converges to $F$. It is enough to note that $\Vert D \circ \phi \Vert_\infty \leq \Vert D \Vert_\infty < \infty$ to apply Bohr's Theorem (see e.g. \cite[Theorem~1.3]{DeGaMaSe} or \cite[Theorem~6.2.3]{Queffelec13}) and get that $D \circ \phi$ actually coincides with $F$ and that it is in $\HC$.
\end{proof}

%
%
%

Arguing as in \cite{Queffelec15}, we can improve Proposition \ref{Composition_Operators_on_H_inf(C_+)} and replace analyticity of $\varphi$ by its uniform convergence
on each half-plane. Indeed, under the previous assumptions, we know that $\varphi(\CC_+)\subset\CC_+$ (this is trivial if $c_0=0$ and follows from \cite[Proposition~4.2]{Gordon_Hedenmalm99}
if $c_0\neq 0$). Hence we may apply \cite[Theorem~3.1]{Queffelec15} which says the following:
\begin{quote}
 Suppose that $\varphi$ is analytic with no zeros in $\C_+$ and that the harmonic conjugate of $\log |\varphi|$ is bounded in $\C_+$. 
 If $\varphi$ can be represented as a convergent Dirichlet series in some half-plane $\C_{\sigma_0}$, then this Dirichlet series converges uniformly in $\C_\e$ for every $\e > 0$.
\end{quote}
We finally get an improved characterization of composition operators on $\HC$.

\begin{theorem} \label{Composition_Operators_on_H_inf(C_+)_FINAL}
A symbol $ \phi: \mathbb{C}_+ \rightarrow \mathbb{C}_+$ generates a composition operator $C_{\phi} : \mathcal{H}^\infty (\mathbb{C}_+) \rightarrow \mathcal{H}^\infty (\mathbb{C}_+)$ if and only if it is of the form $\phi (s) = c_0 s + \varphi(s)$ with $c_0 \in \mathbb{N}_0$ and $\varphi$ a Dirichlet series which converges uniformly in $\C_\e$ for all $\e > 0$.
\end{theorem}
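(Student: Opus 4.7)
The plan is to upgrade Proposition~\ref{Composition_Operators_on_H_inf(C_+)} by strengthening the conclusion ``$\varphi\in\mathcal{D}$'' to ``$\varphi$ converges uniformly on every $\C_\e$'', using the quoted theorem of Queff\'elec. Sufficiency is immediate: uniform convergence on every $\C_\e$ implies $\varphi\in\mathcal{D}$, so Proposition~\ref{Composition_Operators_on_H_inf(C_+)} already produces a composition operator on $\HC$.

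For necessity, my first step is to apply Proposition~\ref{Composition_Operators_on_H_inf(C_+)} and conclude that $\phi$ is analytic with $\phi(s)=c_0 s+\varphi(s)$, $c_0\in\N_0$ and $\varphi\in\mathcal{D}$. The second step is to verify that $\varphi$ itself sends $\C_+$ into $\C_+$. When $c_0=0$ this is trivial since $\varphi=\phi$, and when $c_0\geq 1$ it is precisely the content of \cite[Proposition~4.2]{Gordon_Hedenmalm99} (and is not a consequence of the naive estimate $\re(c_0 s+\varphi(s))>0$).

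Once $\varphi(\C_+)\subset\C_+$ is established, the remaining hypotheses of the cited Queff\'elec result drop out at once. On the simply connected domain $\C_+$, $\varphi$ is holomorphic and nowhere vanishing (as $\re\varphi>0$), so a holomorphic branch of $\log\varphi$ exists on $\C_+$. Its real part equals $\log|\varphi|$, while its imaginary part, a harmonic conjugate of $\log|\varphi|$, coincides with $\Arg\varphi\in(-\pi/2,\pi/2)$ and is therefore bounded on $\C_+$.

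With these conditions in place, Queff\'elec's theorem applies to $\varphi$: being a member of $\mathcal{D}$, it is represented by a convergent Dirichlet series on some half-plane $\C_{\sigma_0}$, and the theorem then delivers uniform convergence of this series on every $\C_\e$, $\e>0$. The main point requiring care is the step $\phi(\C_+)\subset\C_+\Longrightarrow\varphi(\C_+)\subset\C_+$ when $c_0\geq 1$, which is exactly where the Gordon-Hedenmalm structural result is essential; after that, all remaining verifications are routine.
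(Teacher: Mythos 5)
Your proof is correct and follows essentially the same route as the paper: reduce to Proposition~\ref{Composition_Operators_on_H_inf(C_+)}, establish $\varphi(\C_+)\subset\C_+$ (trivially for $c_0=0$, via \cite[Proposition~4.2]{Gordon_Hedenmalm99} for $c_0\geq 1$), and then invoke \cite[Theorem~3.1]{Queffelec15}. Your remark that the harmonic conjugate of $\log|\varphi|$ is just $\Arg\varphi\in(-\pi/2,\pi/2)$, hence bounded, makes explicit a verification the paper leaves implicit.
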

Note that Theorem~\ref{rural} is a version of this result for double Dirichlet series.

\smallskip

We finish this section by pointing out that composition operators on $\HC$ induce composition operators on  $H^\infty(B_{c_0})$; the proof is omited as it follows with the same arguments as the corresponding result from the double case (see proof of  Proposition~\ref{Proposition_comp_ops_HCdos_H(B_c_0dos)}).

\begin{proposition} \label{Comp_op_H_infty_B_c_0}
Let $\phi:\C_+ \rightarrow \C_+$ be a symbol which defines a composition operator $C_\phi: \HC \rightarrow \HC$. Then there exists a map $\psi:B_{c_0}\to B_{c_0}$ such that $\mathcal B^{-1}\circ C_\phi\circ \mathcal B$ coincides with the composition operator $C_\psi: H^\infty(B_{c_0}) \rightarrow H^\infty(B_{c_0})$.
\end{proposition}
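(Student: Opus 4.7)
My plan is to construct $\psi$ componentwise via the Bohr transform and then check the desired intertwining on the Bohr cone, using injectivity of $\mathcal{B}$ to conclude. By Theorem~\ref{Composition_Operators_on_H_inf(C_+)_FINAL}, $\phi(s)=c_0 s+\varphi(s)$ with $c_0\in\N_0$ and $\varphi$ a Dirichlet series uniformly convergent on every $\C_\e$. Each Dirichlet monomial $p_k^{-s}$ lies in $\HC$ with norm $1$, and since $\re\phi>0$ on $\C_+$ one has $|p_k^{-\phi(s)}|=p_k^{-\re\phi(s)}\le 1$, so $p_k^{-\phi(\cdot)}=C_\phi(p_k^{-s})\in\HC$ with norm at most $1$. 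Define
\[
\psi_k:=\mathcal{B}^{-1}\bigl(p_k^{-\phi(\cdot)}\bigr)\in H^\infty(B_{c_0}),\qquad \psi:=(\psi_k)_{k\ge 1}.
\]

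The central step, and the one I expect to be genuinely delicate, is to verify that $\psi(z)\in B_{c_0}$ for every $z\in B_{c_0}$, namely that $\psi_k(z)\to 0$ and $\sup_k|\psi_k(z)|<1$. The uniform bound $\|\psi_k\|_\infty\le 1$ only produces $|\psi_k(z)|\le 1$, which is insufficient. On the Bohr cone $Z_\e:=\{(p_k^{-s})_k:s\in\C_\e\}$ the picture is clear: Proposition~\ref{comp_op_range_phi_C_+} yields $\phi(\C_\e)\subset\C_\delta$ for some $\delta>0$, and a direct computation gives $\psi_k((p_j^{-s})_j)=p_k^{-\phi(s)}$, whence $|\psi_k((p_j^{-s})_j)|\le p_k^{-\delta}$, so $\sup_k|\psi_k(z)|\le 2^{-\delta}<1$ and $\psi_k(z)\to 0$ on $Z_\e$. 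Extending these two conclusions to an arbitrary $z\in B_{c_0}$ is the real work: one way is to argue that $(\psi_k)_k$ is a bounded family in $H^\infty(B_{c_0})$, hence a normal family by a Montel-type argument, whose every subsequential compact-open limit vanishes on $Z_\e$ and therefore vanishes identically by injectivity of the Bohr transform; this forces $\psi_k(z)\to 0$ pointwise. A separate maximum modulus argument applied to finite-dimensional sections $(z_1,\dots,z_N,0,\dots)$, on which each $\psi_k$ is bounded holomorphic and non-constant, provides the strict inequality $|\psi_k(z)|<1$ for every $k$; combined with the decay $\psi_k(z)\to 0$ this yields $\sup_k|\psi_k(z)|<1$.

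With $\psi:B_{c_0}\to B_{c_0}$ in hand, identifying the operators is a short Bohr cone argument. Take $f\in H^\infty(B_{c_0})$ and set $D:=\mathcal{B}(f)$. Both $F:=\mathcal{B}^{-1}(C_\phi D)$ and $G:=f\circ\psi$ are bounded holomorphic functions on $B_{c_0}$. On $Z_\sigma$ with $\sigma$ large enough for the relevant Dirichlet series to converge pointwise, $\psi((p_k^{-s})_k)=(p_k^{-\phi(s)})_k$ and both $F((p_k^{-s})_k)$ and $G((p_k^{-s})_k)$ coincide with $(C_\phi D)(s)=D(\phi(s))$. Hence the Bohr-associated Dirichlet series of $F$ and $G$ agree on $\C_\sigma$, and by injectivity of $\mathcal{B}$ on $H^\infty(B_{c_0})$ one obtains $F=G$ on all of $B_{c_0}$, proving $\mathcal{B}^{-1}\circ C_\phi\circ\mathcal{B}=C_\psi$.
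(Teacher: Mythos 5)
Your construction of $\psi$ and your Montel-plus-injectivity argument for $\psi(B_{c_0})\subset B_{c_0}$ are exactly the paper's (the paper proves this proposition by referring to the double-variable analogue, Proposition~\ref{Proposition_comp_ops_HCdos_H(B_c_0dos)}, whose proof runs the same normal-family argument on $(\mathcal B^{-1}(p_k^{-\phi}))_k$). Where you genuinely diverge is in the final identification of $\mathcal B^{-1}\circ C_\phi\circ\mathcal B$ with $C_\psi$: the paper first checks the identity on finite polynomials, then introduces the topology of uniform convergence on the Bohr-cone compacts $K_\sigma$ (resp.\ on the half-planes $\overline{\C_\sigma}$), proves that polynomials are dense and that both operators are continuous for these topologies, and extends by continuity. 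You instead evaluate both $\mathcal B^{-1}(C_\phi D)$ and $f\circ\psi$ on the Bohr cone $\{(p_k^{-s})_k\}$, observe that both equal $D(\phi(s))$, and invoke uniqueness of Dirichlet coefficients together with the bijectivity of $\mathcal B$. This is shorter and avoids the whole density/continuity machinery; what the paper's longer route buys is that it works verbatim when no convenient ``test set'' like the Bohr cone is available, and it makes explicit the continuity properties of $C_\phi$ and $C_\psi$ that are reused elsewhere. Two small points you should tighten: (i) the claim $f(\psi(p^{-s}))=D(\phi(s))$ uses that $\mathcal B(f)(w)=f(p^{-w})$ holds for \emph{all} $w\in\C_+$ (not just for $\re w$ large), which is where you need $\phi(\C_+)\subset\C_+$ and the fact that elements of $\HC$ converge on all of $\C_+$ to their bounded extension; (ii) your maximum-modulus step for $\sup_k|\psi_k(z)|<1$ cannot literally be run on finite-dimensional sections $(z_1,\dots,z_N,0,\dots)$, since a general $z\in B_{c_0}$ is not finitely supported and $\psi_k$ may be constant on such a section; the correct slice is $\lambda\mapsto\psi_k(\lambda z)$ on a disc of radius $1/\|z\|_\infty>1$, combined with the fact that $|\psi_k|<1$ on the Bohr cone rules out $\psi_k$ being a unimodular constant. (The paper itself only verifies $\psi(z)\in c_0$ and is silent on the strict inequality, so on this point you are, modulo the fix, more careful than the source.)
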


This relationship we have just stated only works one way, in the sense that there are composition operators on $H^{\infty} (B_{c_{0}})$ that do not induce a composition operator on $\HC$. The mapping $\psi :B_{c_0} \rightarrow B_{c_0}$ given by $\psi(z)=(z_1,0,\ldots)$ defines a composition operator $C_\psi$ on $H^\infty(B_{c_0})$. If there were $\phi: \C_+ \rightarrow \C_+$ such that $C_\psi= \mathcal{B}^{-1} \circ C_\phi \circ \mathcal{B}$, then $\mathcal{B}^{-1} (\frac{1}{3^{\phi(s)}}) = 0$ and hence $\frac{1}{3^{\phi(s)}} = 0$, which is a contradiction.

\section{Composition operators on $\HCdos$}

\subsection{The sufficient condition}

We can now start our work towards the characterization of composition operators on $\HCdos$ stated in Theorem~\ref{rural}. We do this in two steps. First we see that the symbols $\phi$ as in the statement of the theorem indeed define composition operators (see Theorem~\ref{Composition_Operators_on_H_inf(C_+^2)_sufficiency}). Once we have this we show that these are in fact the only symbols defining a composition operator on $\HCdos$ (this follows from Theorem~\ref{Comp_op_HCdos_necessity}). We begin by showing that a symbol as in \eqref{quinquennal} defines a composition operator on the space of double Dirichlet polynomials (finite series).

\begin{lemma} \label{comp_op_double_Dirichlet_polynomial}
Let $\phi: \C_+^2 \rightarrow \C_+^2$ be an analytic function and suppose there exists some $\sigma > 0$ such that $\phi_j(s,t)=c_j s + d_j t + \varphi_j(s,t)$ for $j=1,2$ and $(s,t) \in \C_\sigma^2$, where $\varphi_j(s,t)=\sum_{m,n=1}^\infty \frac{b^{(j)}_{m,n}}{m^s n^t}$ converges absolutely in $\C^2_\sigma$ and $c_j,d_j \in \N_0$, $j=1,2$. Then, if $D$ is a double Dirichlet polynomial, $D \circ \phi$ is a double Dirichlet series in $\mathcal{H}^\infty(\C_+^2)$.
\end{lemma}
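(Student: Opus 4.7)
The plan is to proceed in three stages: first to verify that $D\circ\phi$ is bounded and holomorphic on $\C_+^2$, then to exhibit an absolutely convergent double Dirichlet series expansion of $D\circ\phi$ on a sufficiently deep half-plane $\C_\rho^2$, and finally to invoke the double-variable Bohr-type theorem from \cite{Nuestro} to upgrade the expansion to a regularly convergent representation on all of $\C_+^2$, placing $D\circ\phi$ in $\HCdos$.

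The first stage is immediate. Writing $D(s,t)=\sum_{m=1}^M\sum_{n=1}^N a_{m,n}m^{-s}n^{-t}$, one has $|m^{-s}n^{-t}|\le 1$ on $\overline{\C_+^2}$, so $|D|\le \sum|a_{m,n}|$ there. Since $\phi$ is analytic with $\phi(\C_+^2)\subset \C_+^2$, the composition $D\circ\phi$ is analytic and bounded on $\C_+^2$.

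The second stage is the technical core. Fix $\rho>\sigma$. For each pair $(m,n)$ in the (finite) support of $D$, I would use the decomposition $\phi_j(s,t)=c_j s+d_j t+\varphi_j(s,t)$ to rewrite
\[
m^{-\phi_1(s,t)}n^{-\phi_2(s,t)}=(m^{c_1}n^{c_2})^{-s}(m^{d_1}n^{d_2})^{-t}\cdot\exp\bigl(-(\log m)\varphi_1(s,t)-(\log n)\varphi_2(s,t)\bigr),
\]
and then expand the exponential as its Taylor series $\sum_{k\ge 0}\frac{1}{k!}\bigl(-(\log m)\varphi_1-(\log n)\varphi_2\bigr)^k$. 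Each term is a finite linear combination of products $\varphi_1^a\varphi_2^b$, and since $\varphi_1,\varphi_2$ are absolutely convergent double Dirichlet series on $\C_\sigma^2$, the Cauchy product rule gives that each such term is itself an absolutely convergent double Dirichlet series on $\C_\rho^2$. Writing $\|\varphi_j\|_\rho=\sum_{p,q}|b^{(j)}_{p,q}|p^{-\rho}q^{-\rho}<\infty$, the absolute sum of the Dirichlet coefficients of the $k$-th power is bounded by $\frac{1}{k!}\bigl((\log m)\|\varphi_1\|_\rho+(\log n)\|\varphi_2\|_\rho\bigr)^k$, which sums over $k$ to $\exp\bigl((\log m)\|\varphi_1\|_\rho+(\log n)\|\varphi_2\|_\rho\bigr)<\infty$. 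This justifies the interchange of the exponential and Dirichlet summations and shows that $m^{-\phi_1}n^{-\phi_2}$ admits an absolutely convergent double Dirichlet expansion on $\C_\rho^2$; the multiplication by the Dirichlet monomial $(m^{c_1}n^{c_2})^{-s}(m^{d_1}n^{d_2})^{-t}$ is a mere reindexing of Dirichlet coefficients. Summing over the finite support of $D$ with coefficients $a_{m,n}$ yields $D\circ\phi$ as an absolutely convergent double Dirichlet series on $\C_\rho^2$.

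In the final stage, $D\circ\phi$ is bounded on $\C_+^2$ (from stage one) and coincides with a double Dirichlet series on $\C_\rho^2$ (from stage two); the double-variable analogue of Bohr's theorem established in \cite{Nuestro}, realised through the isometric isomorphism between $\HCdos$ and $H^\infty(B_{c_0^2})$, then extends the Dirichlet representation to a regularly convergent one on all of $\C_+^2$, placing $D\circ\phi$ in $\HCdos$. The principal technical obstacle is the careful bookkeeping in the second stage, particularly the rigorous justification of the interchange of summations in the exponential expansion; the absolute-convergence estimate above is the key tool.
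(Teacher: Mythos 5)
Your argument is correct and follows essentially the same route as the paper: expand $m^{-\varphi_1}n^{-\varphi_2}$ via the exponential series, rearrange into an absolutely convergent double Dirichlet series on a half-plane (justified by an absolute-convergence estimate on the coefficients), and conclude with the double-variable Bohr theorem of \cite{Nuestro} together with $\Vert D\circ\phi\Vert_\infty\le\Vert D\Vert_\infty$. The only difference is cosmetic: the paper expands $k^{-\varphi_1}$ as a product of exponentials of the individual terms and indexes the resulting coefficients by multiplicative factorizations of $(M,N)$, whereas you expand the single exponential $\exp(-(\log m)\varphi_1-(\log n)\varphi_2)$ and control the rearrangement by the multinomial bound $\frac{1}{k!}\bigl((\log m)\Vert\varphi_1\Vert_\rho+(\log n)\Vert\varphi_2\Vert_\rho\bigr)^k$, which is arguably a cleaner way to justify the same interchange.
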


\begin{proof}
We are going to see that $k^{-\varphi_1(s,t)}$ can be written as a double Dirichlet series in $\C_\sigma^2$. Using the expansion of the exponential function, if $(s,t) \in \C_\sigma^2$,
\begin{align*}
k^{-\varphi_1(s,t)} &=e^{- \log k \sum_{m,n=1}^\infty \frac{b^{(1)}_{m,n}}{m^s n^t} }\\
&=\prod_{m,n=1}^\infty e^{- \log k \frac{b^{(1)}_{m,n}}{m^s n^t}}\\
&=k^{-b_{1,1}^{(1)}}\prod_{\substack{m,n=1, \\ (m,n) \neq (1,1)}}^\infty \left ( 1 + \sum_{r=1}^\infty \frac{(-\log k \, b^{(1)}_{m,n})^r}{r!} \frac{1}{(m^r)^s} \frac{1}{(n^r)^t} \right ).
\end{align*}
Let us see how can we rearrange this expression for $k^{-\varphi_1(s,t)}$ into a double Dirichlet series. For each $(M,N) \in \N^2$, because of the absolute convergence, we may define $A_{k,M,N}$ in the following way:
\begin{enumerate}[(i)]
\item If $(M,N)=(1,1)$, then $A_{k,1,1} = 1$.
\item
If $M\neq 1$ and $N=1$, consider all possible factorizations of $M$ as $M=m_1^{r_1} \cdots m_d^{r_d}$ where $m_1,\dots,m_d\in\N \setminus \{1\}$ are all different, $r_1,\dots,r_d\in\N$ (there is at least one such factorization by setting $m_1=M$ and $r_1=1$). Now define
$$A_{k,M,1}= \sum_{m_1^{r_1} \cdots m_d^{r_d} = M} \left [ \prod_{j=1}^d \frac{(-\log k \; b_{m_j,1}^{(1)})^{r_j}}{r_j !} \right ].$$

\item If $M=1$, $N \neq 1$, proceeding analogously to the previous case, define
$$A_{k,1,N}= \sum_{n_1^{r_1} \cdots n_d^{r_d} = N} \left [ \prod_{j=1}^d \frac{(-\log k \; b_{1,n_j}^{(1)})^{r_j}}{r_j !} \right ].$$

\item If both $M,N \neq 1$, combining the two previous cases,
we define
$$A_{k,M,N} = \sum_{\substack{m_1^{r_1} \cdots m_d^{r_d} = M \\ n_1^{r_1} \cdots n_d^{r_d} = N}} \left [ \prod_{j=1}^d \frac{(-\log k \; b_{m_j,n_j}^{(1)})^{r_j}}{r_j !} \right ].$$
\end{enumerate}
Then for any $(s,t)\in\C_{\sigma}^2$,
$$k^{-\varphi_1(s,t)} = k^{-b_{1,1}^{(1)}}\sum_{M,N=1}^{\infty} \frac{A_{k,M,N}}{M^s N^t}.$$
With the same idea one gets
$$l^{-\varphi_2(s,t)} = l^{-b_{1,1}^{(2)}}\sum_{M,N=1}^{\infty} \frac{B_{l,M,N}}{M^s N^t}.$$
As these two double Dirichlet series are absolutely convergent in $\C_\sigma^2$, they can be multiplied to obtain 
$$k^{-\varphi_1(s,t)} l^{-\varphi_2(s,t)} = k^{-b_{1,1}^{(1)}}  l^{-b_{1,1}^{(2)}}\sum_{M,N=1}^\infty \frac{C_{k,l,M,N}}{M^s N^t}.$$
Finally, let $D(s,t)=\sum_{k=1}^K \sum_{l=1}^L \frac{a_{k,l}}{k^s l^t}$. Then, for $(s,t) \in \C_\sigma^2$,
\begin{align*}
(D\circ \phi)(s,t)  &= \sum_{k=1}^K \sum_{l=1}^L a_{k,l} \, k^{-c_1 s -d_1 t -b_{1,1}^{(1)}} \, l^{-c_2 s -d_2 t -b_{1,1}^{(2)} } \sum_{M,N=1}^\infty \frac{C_{k,l,M,N}}{M^s N^t} \\
&= \sum_{M,N=1}^\infty \sum_{k=1}^K \sum_{l=1}^L \frac{k^{-b_{1,1}^{(1)}} l^{-b_{1,1}^{(2)}}C_{k,l,M,N}}{(k^{c_1} l^{c_2} M)^s (k^{d_1} l^{d_2} N)^t}
\end{align*}
which can be rearranged into a double Dirichlet series which still is absolutely convergent on $\C_\sigma^2$. Moreover, as $\Vert D \circ \phi \Vert_\infty \leq \Vert D\Vert_\infty < \infty$, Bohr's theorem \cite[Theorem~2.7]{Nuestro} guarantees that $D \circ \phi \in \mathcal{H}^\infty(\C_+^2)$.
\end{proof}

Following the same scheme as in the one-dimensional case, some results concerning the range of the symbols of the composition operators are needed.

\begin{remark}\label{soldadet}
Suppose $\phi$ is an analytic function as in \eqref{quinquennal}, where the Dirichlet series $\varphi_{j}$ converge regularly on $\mathbb{C}_{\sigma}^{2}$ for some $\sigma >0$. Then the function $\tilde{\varphi}_{j} (s,t) = \phi(s,t) - c_{0}^{(j)} s - d_{0}^{(j)}t$, defined on $\mathbb{C}_{+}^{2}$, is clearly analytic and coincides with $\varphi_{j}$ on $\mathbb{C}_{\sigma}^{2}$. In other words, $\tilde{\varphi}_{j}$ is an analytic extension of $\varphi_{j}$ to $\mathbb{C}_{+}^{2}$. For the sake of clarity in the notation we will write $\varphi_{j}$ also for the extension, identifying the Dirichlet series with the extension. This is, for example, how the statement of Lemma~\ref{com_op_positive_range_varphi_double} should be understood.\\
On the other hand, if we suppose that  each $\varphi_{j}$ converges regularly on $\mathbb{C}_{+}^{2}$, then they define an analytic function (see \cite[page~531]{Nuestro}). Therefore if $\phi$ is as in \eqref{quinquennal}, then by Hartog's theorem, it is analytic. This is the case, for example, in Lemma~\ref{range_phi_comp_op_double} and Theorem~\ref{Composition_Operators_on_H_inf(C_+^2)_sufficiency}.
\end{remark}

\begin{lemma} \label{com_op_positive_range_varphi_double}
Suppose $\phi: \C_+^2 \rightarrow \C_+^2$ is an analytic function such that $\phi_j(s,t)=c_j s + d_j t + \varphi_j(s,t)$ for $j=1,2$, where $\varphi_j(s,t)=\sum_{m,n=1}^\infty \frac{b^{(j)}_{m,n}}{m^s n^t}$ converges in $\C^2_+$ and $c_j,d_j \in \N_0$, $j=1,2$. Then, $\re \varphi_j(s,t) \geq 0$ for all $(s,t) \in \C_+^2$, $j=1,2$.
\end{lemma}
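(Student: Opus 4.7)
The plan is to reduce the two-variable statement to two nested applications of its one-variable analogue. The one-variable fact I shall use is: if $\psi\colon\C_+\to\C_+$ is analytic of the form $\psi(s)=cs+\eta(s)$ with $c\in\N_0$ and $\eta$ a Dirichlet series that converges on $\C_+$, then $\re\eta\geq 0$ on $\C_+$. For $c=0$ this is immediate since $\psi=\eta$, while for $c\geq 1$ the Dirichlet series $\eta$ tends to its first coefficient as $\re s\to\infty$ (hence stays bounded there), so $\psi(s)=cs+O(1)$ at $\infty$ has angular derivative exactly $c$, and Julia's lemma in the right half-plane at $\infty$ gives $\re\psi(s)\geq c\,\re s$, equivalently $\re\eta\geq 0$.

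First, fix $t_{0}\in\C_+$ and apply this to $s\mapsto\phi_{j}(s,t_{0})$. The regular convergence of $\varphi_{j}$ on $\C_+^2$ guarantees that $B_{m}^{(j)}(t_{0}):=\sum_{n\geq 1}b^{(j)}_{m,n}/n^{t_{0}}$ makes sense and that $\varphi_{j}(s,t_{0})=\sum_{m\geq 1}B_{m}^{(j)}(t_{0})/m^{s}$ is a one-variable Dirichlet series in $s$ convergent on $\C_+$. Writing $\phi_{j}(s,t_{0})=c_{j}s+\bigl(d_{j}t_{0}+\varphi_{j}(s,t_{0})\bigr)$ and invoking the one-variable fact with $\eta(s)=d_{j}t_{0}+\varphi_{j}(s,t_{0})$ yields
\[
\re\bigl(d_{j}t+\varphi_{j}(s,t)\bigr)\geq 0 \qquad \text{for every }(s,t)\in\C_+^2.
\]
If $d_{j}=0$ this is already the desired conclusion.

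Assume now $d_{j}\geq 1$ and set $\chi_{j}(s,t):=d_{j}t+\varphi_{j}(s,t)$. For fixed $s\in\C_+$, the map $t\mapsto\re\chi_{j}(s,t)$ is harmonic and non-negative on $\C_+$, so by the minimum principle it is either strictly positive or identically zero. The latter would force $\chi_{j}(s,\cdot)$ to be a purely imaginary constant (by the open mapping theorem), which contradicts the boundedness of $\varphi_{j}(s,\cdot)=\sum_{n}B_{n}^{(j)}(s)/n^{t}$ as $\re t\to\infty$ against the unbounded term $d_{j}t$. Hence $\chi_{j}(s,\cdot)\colon\C_+\to\C_+$ is a \emph{strict} self-map, and a second application of the one-variable fact, now to $t\mapsto\chi_{j}(s,t)=d_{j}t+\varphi_{j}(s,t)$, delivers $\re\varphi_{j}(s,t)\geq 0$ for every $t\in\C_+$, finishing the proof.

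The main obstacle is precisely the promotion, in the case $d_{j}\geq 1$, of the non-strict inequality $\re\chi_{j}\geq 0$ produced by the first application to a strict self-map of $\C_+$ before the one-variable tool can be invoked a second time; this step genuinely uses both the Dirichlet-series structure of $\varphi_{j}(s,\cdot)$ in the $t$-variable and the hypothesis $d_{j}\geq 1$ to rule out the degenerate alternative of the minimum principle.
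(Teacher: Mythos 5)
Your proof is correct and follows essentially the same route as the paper: two successive applications of the one-variable fact (the first part of Proposition~4.2 of Gordon--Hedenmalm, which you re-derive via the angular derivative at infinity) to the slices $s\mapsto\phi_j(s,t_0)$ and then $t\mapsto d_jt+\varphi_j(s,t)$. Your explicit treatment of the boundary case in the second application --- where the slice map a priori only lands in $\overline{\C_+}$ and must be upgraded to a strict self-map via the minimum principle --- is a detail the paper passes over silently, and it is a welcome addition.
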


\begin{proof}
Fix $s_0 \in \C_+$ and consider $\phi_j(s_0,t)=c_j s_0 + d_j t + \varphi_j(s_0,t) = d_j t + (c_j s_0 + \varphi_j(s_0,t))$. Using the first part of  \cite[Proposition~4.2]{Gordon_Hedenmalm99}, $\re (c_j s_0 + \varphi_j(s_0,t)) \geq 0$ for all $t \in \C_+$, but also for all $s_0 \in \C_+$.
Fixing now $t_0 \in \C_+$ and using again \cite[Proposition~4.2]{Gordon_Hedenmalm99}, $\re \varphi_j(s_0,t_0) \geq 0$ for all $(s_0, t_0) \in \C_+$.
\end{proof}

\begin{remark} \label{comp_op_double_range_re_varphi}
If $f: \C_+^2 \rightarrow \overline{\C_+}$ is a holomorphic function such that $\re f(s_0,t_0) = 0$ for some $(s_0,t_0) \in \C_+^2$, then in fact $ \re f(s,t) = 0$ for all $(s,t) \in \C_+^2$. Indeed, if we define
$f_{t_0}: \C_+ \rightarrow \overline{\C_+}$ as $f_{t_0}(s) = f(s,t_0)$ and suppose that $f_{t_0}$ is not constant, by the open mapping property $f_{t_0}(\C_+)$ is an open set, which contradicts the fact that $f(s_0,t_0) = i\tau_{0}$ for some $\tau_{0} \in \mathbb{R}$. Therefore, it is constant and $f_{t_0}(s) = i\tau_{0}$ for all $s \in \C_+$. 
Proceeding in the same way, defining for each $s \in \C_+$ a function $f_s: \C_+ \rightarrow \overline{\C_+}$ by $f_{s}(t) = f(s,t)$ we conclude that $f(s,t) = f_s(t) = i\tau_{0}$ for all $t \in \C_+$. This gives $ \re f \equiv 0$ in $\C_+^2$.\\
This allows to strengthen the Lemma~\ref{com_op_positive_range_varphi_double} to say that either $ \re \varphi_j(s,t) >0$ for all $(s,t) \in \C_+^2$, or $ \re \varphi_j(s,t) $ is constant and equal to zero. 
\end{remark}

We aim now at an analogue of Proposition~\ref{comp_op_range_phi_C_+} for double Dirichlet series. A fundamental tool is the following version in our setting of \cite[Theorem~3.3]{DeGaMaSe18}.
We could give a proof of it following the lines of \cite{DeGaMaSe18}, but we prefer to give a more elementary proof avoiding the sophisticated tools (Bohr transform,
Aron-Berner extension) used there.

\begin{lemma} \label{sup_MDS_strictly_decreasing}
Let $D(s,t)=\sum_{m,n=1}^\infty \frac{a_{m,n}}{m^s n^t}$ be a non-constant double Dirichlet series in $\mathcal{H}^\infty(\C_+^2)$, and $0< \sigma_1 < \eta_1$, $0 < \sigma_2 < \eta_2$. Then
$$ \sup_{\substack{\re s > \sigma_1 \\ \re t > \sigma_2}} \left | \sum_{m,n=1}^\infty \frac{a_{m,n}}{m^s n^t} \right | > \sup_{\substack{\re s > \eta_1 \\ \re t > \eta_2}} \left | \sum_{m,n=1}^\infty \frac{a_{m,n}}{m^s n^t} \right |.$$
\end{lemma}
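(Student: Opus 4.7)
I propose to argue by contraposition: assume $D$ is non-constant and that both suprema are equal to a common value $M$, then derive a contradiction. A first translation reduction is to replace $D$ by $\tilde D(s,t) := D(s+\sigma_1, t+\sigma_2)$, which is another non-constant double Dirichlet series in $\mathcal H^\infty(\C_+^2)$ (with coefficients $a_{m,n}\, m^{-\sigma_1} n^{-\sigma_2}$). Setting $\tau_i := \eta_i-\sigma_i > 0$, the hypothesis becomes $\|\tilde D\|_{\mathcal H^\infty(\C_+^2)} = \sup_{\C_{\tau_1}\times\C_{\tau_2}}|\tilde D| = M$, and the task reduces to showing this forces $\tilde D$ to be constant.

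The strategy is slicing combined with the one-variable analogue of the lemma (essentially \cite[Theorem~3.3]{DeGaMaSe18} specialized to a single variable), which asserts that any non-constant $g\in\mathcal H^\infty(\C_+)$ has $\sup_{\C_\sigma}|g|$ strictly decreasing in $\sigma$. For each fixed $t_0\in\C_+$, the slice $s\mapsto\tilde D(s,t_0) = \sum_m \bigl(\sum_n a_{m,n}/n^{t_0}\bigr)/m^s$ is a bounded one-variable Dirichlet series in $\mathcal H^\infty(\C_+)$ (the inner sum converges by regular convergence of $\tilde D$). If $\tilde D$ depends only on $t$ or only on $s$, it reduces to a non-constant one-variable Dirichlet series, and the one-variable lemma directly contradicts the equality of suprema. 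Otherwise, some slice $\tilde D(\cdot,t_0^*)$ is non-constant, and the one-variable lemma gives $\sup_{\C_+}|\tilde D(\cdot,t_0^*)| > \sup_{\C_{\tau_1}}|\tilde D(\cdot,t_0^*)|$.

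To propagate this pointwise strict inequality into a global contradiction, I introduce the intermediate supremum $\sup_{\C_{\tau_1}\times\C_+}|\tilde D|$, which is sandwiched between $\sup_{\C_{\tau_1}\times\C_{\tau_2}}|\tilde D| = M$ and $\|\tilde D\|_\infty = M$ and hence equals $M$. Writing $H(t) := \sup_{\re s>0}|\tilde D(s,t)|$ and $K(t) := \sup_{\re s>\tau_1}|\tilde D(s,t)|$, one gets $H\ge K$ pointwise with $H(t_0^*) > K(t_0^*)$, and $\sup_{\C_+}H = \sup_{\C_{\tau_2}}K = \sup_{\C_{\tau_2}}H = M$. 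Subharmonicity of $\log H$ in $t$ together with the last equality should, via the maximum principle, force $H \equiv M$ on $\C_+$; combining this with the observation that the set of $t_0$ where $\tilde D(\cdot,t_0)$ is constant in $s$ is discrete (being the common zero set in $\C_+$ of the one-variable Dirichlet series $\sum_n a_{m,n}/n^t$, $m\geq 2$) should yield the contradiction. \textbf{Main obstacle.} The most delicate step is this propagation: $H$ and $K$ are suprema that may not be attained, so the strong maximum principle for subharmonic functions cannot be invoked directly. A fully elementary proof --- as the paper promises --- would likely bypass subharmonicity and rely on direct Cauchy estimates on bidiscs $\{|s-s_0|<\re s_0\}\times\{|t-t_0|<\re t_0\}\subset\C_+^2$ combined with the mean-value property, giving a direct two-variable extension of the one-variable argument.
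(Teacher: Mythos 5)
Your plan starts from the right ingredients (slicing plus the one-variable strict decrease of the supremum), but the propagation step --- the one you yourself flag as the ``main obstacle'' --- is a genuine gap, and it breaks at two distinct points. First, the claim that subharmonicity of $\log H$ together with $\sup_{\C_{\tau_2}}H=\sup_{\C_+}H=M$ ``should force $H\equiv M$'' is false for general bounded subharmonic functions on a half-plane: take $H(t)=M\,\bigl|e^{-1/(t+1)}\bigr|$, which is bounded by $M$, has supremum $M$ on every sub-half-plane $\C_{\tau_2}$ (the supremum is approached as $\re t\to+\infty$ but never attained), yet is nowhere equal to $M$. Nothing in your argument invokes the special almost-periodic structure of $H$ that would be needed to exclude such behaviour, and the suggested fallback (``Cauchy estimates on bidiscs plus the mean-value property'') is not carried out. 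Second, even granting $H\equiv M$, the final contradiction does not follow: the one-variable lemma gives the pointwise strict inequality $K(t)<H(t)=M$ for every $t$ outside a discrete set, but a family of numbers each strictly less than $M$ can perfectly well have supremum equal to $M$, so this is consistent with $\sup_{t\in\C_{\tau_2}}K(t)=M$. What is missing is a quantitative mechanism that converts a single strict inequality into strict monotonicity of the supremum in the abscissa.

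The paper supplies exactly that mechanism by a different route. It first reduces suprema over open half-planes to suprema over vertical lines, then shows --- by the same slicing you propose --- that $|a_{1,1}|<\sup_{\re s>\sigma_1,\,\re t>\sigma_2}|D(s,t)|$; since $\sup_{\C_\gamma^2}|D|\to|a_{1,1}|$ as $\gamma\to+\infty$, one may choose $\gamma>\max(\eta_1,\eta_2)$ with $\sup_{\C_\gamma^2}|D|$ strictly below the supremum at $(\sigma_1,\sigma_2)$. Writing $\eta_j$ as a convex combination of $\sigma_j$ and $\gamma$, two successive applications of Hadamard's three-lines theorem bound the supremum on $\re s=\eta_1$, $\re t=\eta_2$ by a weighted geometric mean of the four ``corner'' suprema, one of which is strictly smaller than the target; the log-convexity does the propagation that your maximum-principle step cannot. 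If you wish to salvage your scheme, the cleanest repair is to replace the subharmonicity argument by this three-lines (equivalently, log-convexity of $\sigma\mapsto\sup_{\re s=\sigma}|D(s,t)|$) argument, applied once in each variable.
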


\begin{proof}
We first recall that for all Dirichlet series $h$ belonging to $\HC$ and for all $\sigma>0$, then 
$\sup_{\re(s)=\sigma}|h(s)|=\sup_{\re(s)>\sigma}|h(s)|$ (see for instance \cite[Corollary~2.3]{DeGaMaSe18}). This yields
\begin{align*}
  \sup_{\substack{\re (s)>\alpha_1\\ \re(t)>\alpha_2}} |D(s,t)| &= \sup_{\re(t)>\alpha_2}\sup_{\re (s)>\alpha_1} |D(s,t)|\\
  &=\sup_{\re(t)>\alpha_2} \sup_{\re(s)=\alpha_1} |D(s,t)|\\
  &=\sup_{\re(s)=\alpha_1}\sup_{\re(t)>\alpha_2}  |D(s,t)|\\
  &=\sup_{\substack{\re (s)=\alpha_1\\ \re(t)=\alpha_2}}  |D(s,t)|.
\end{align*}
Observe also that, since $D$ is not constant, 
\[|a_{1,1}|<\sup_{\substack{\re (s)>\sigma_1\\ \re(t)>\sigma_2}} |D(s,t)| .\]
Again, this follows easily from the corresponding result in the one-dimensional case. Indeed, let $f(t)=\sum_{n=1}^{+\infty}a_{m,1}n^{-t}$ and $D_t(s)=D(s,t)$,
so that $f(t)$ is the constant term of the Dirichlet series $D_t$. If $f$ is constant, then there exists $t'$ with $\re(t')>\sigma_2$ such that 
$D_{t'}$ is not constant (otherwise $D$ itself would be constant). We then write
\[ |a_{1,1}| =f(2\sigma_2)<\sup_{\re(s)>\sigma_1} |D_{t'}(s)|\leq \sup_{\substack{\re (s)>\sigma_1\\ \re(t)>\sigma_2}} |D(s,t)| .\]
On the contrary, if $f$ is not constant, we write
\[ |a_{1,1}|<\sup_{\re(t)>\sigma_2} |f(t)|\leq \sup_{\substack{\re (s)>\sigma_1\\ \re(t)>\sigma_2}} |D(s,t)| .\]
Let $\theta_1,\theta_2\in (0,1)$ be such that $\eta_1=(1-\theta_1)\sigma_1+\theta_1 \gamma$,
$\eta_2=(1-\theta_2)\sigma_2+\theta_2\gamma$. Two successive applications of Hadamard's three lines theorem lead to
\begin{align*}
 \sup_{\substack{\re(s)=\eta_1\\ \re(t)=\eta_2}}|D(s,t)| 
 &\leq \sup_{\substack{\re (s)=\sigma_1\\ \re(t)=\sigma_2}}|D(s,t)|^{(1-\theta_1)(1-\theta_2)}\times \sup_{\substack{\re (s)=\sigma_1\\ \re(t)=\gamma}}|D(s,t)|^{(1-\theta_1)\theta_2} \\
&\quad\quad \times
\sup_{\substack{\re (s)=\gamma \\ \re(t)=\sigma_2}}|D(s,t)|^{\theta_1(1-\theta_2)}\times \sup_{\substack{\re (s)=\gamma \\ \re(t)=\gamma }}|D(s,t)|^{\theta_1\theta_2}\\
 &<\sup_{\substack{\re (s)=\sigma_1\\ \re(s)=\sigma_2}} |D(s,t)|. 
\end{align*}
\end{proof}

\begin{remark}
 The previous proof uses that if $D(s)=\sum_{n\geq 1}a_n n^{-s}$ is a nonconstant Dirichlet series converging in some half-plane $\CC_\sigma$, then $|a_1|<\sup_{\re(s)>\sigma}|D(s)|$.
 Although this is well known to specialists, we have not been able to locate a proof for this statement and we provide here an elementary one for the convenience of the reader.
 We may assume that $a_1$ is not equal to zero. 
 Let $k>1$ be such that $a_k\neq 0$ and $a_2,\dots,a_{k-1}$ are all equal to zero. Then as $\re (s)$ goes to $+\infty$, 
 $D(s)-a_1\sim a_k k^{-s}$. Let $\sigma_0>\sigma$ be such that $D(s)-a_1=(1+\e(s))a_k k^{-s}$ with $|\e(s)|<1/2$ for $\re (s)\geq\sigma_0$. Let $\tau\in\R$ be such that 
 $a_k k^{-\sigma_0-i\tau}=\lambda a_1 $ for some $a_1>0$. Then 
 \begin{align*}
  |D(\sigma_0+i\tau)&=\left|(1+\lambda)a_1+\e(s) a_kk^{-s}\right|\\
  &\geq \left(1+\frac\lambda2\right)|a_1|>|a_1|.
 \end{align*}
\end{remark}

\begin{lemma} \label{range_phi_comp_op_double}
Suppose $\phi: \C_+^2 \rightarrow \C_+^2$ is a function such that $\phi_j(s,t)=c_j s + d_j t + \varphi_j(s,t)$ for $j=1,2$, where $\varphi_j(s,t)=\sum_{m,n=1}^\infty \frac{b^{(j)}_{m,n}}{m^s n^t}$ converges regularly in $\C^2_+$ and $c_j,d_j \in \N_0$ for $j=1,2$. Then, for every $\e > 0$ there exists some $\delta > 0$ such that $\phi_j (\C_\e^2) \subset \C_\delta$ for all $j=1,2$.
\end{lemma}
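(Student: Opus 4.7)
The proof splits according to whether $c_j + d_j$ vanishes.

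If $c_j + d_j \geq 1$, Lemma~\ref{com_op_positive_range_varphi_double} gives $\re \varphi_j \geq 0$ on $\C_+^2$, so for $(s,t) \in \C_\e^2$,
\[
\re \phi_j(s,t) \;=\; c_j \re s + d_j \re t + \re \varphi_j(s,t) \;\geq\; (c_j+d_j)\e \;\geq\; \e,
\]
and $\delta = \e$ works.

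The substantive case is $c_j = d_j = 0$, where $\phi_j = \varphi_j$. Then $\re \varphi_j$ cannot be identically zero (otherwise $\phi_j$ would miss $\C_+$), and Remark~\ref{comp_op_double_range_re_varphi} upgrades $\re\varphi_j \geq 0$ to the strict inequality $\re\varphi_j > 0$ throughout $\C_+^2$. The plan is to encode the desired uniform lower bound for $\re \varphi_j$ on $\C_\e^2$ as an upper bound for $|F|$, where $F(s,t) := \exp(-\varphi_j(s,t))$, and then exploit the strict decrease statement of Lemma~\ref{sup_MDS_strictly_decreasing}.

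The key technical step is verifying $F \in \HCdos$. Boundedness is immediate from $|F|=e^{-\re\varphi_j}\leq 1$ on $\C_+^2$. For the Dirichlet series structure, choose $\eta$ large enough that $\varphi_j$ converges absolutely on $\C_\eta^2$ with $|\varphi_j-b^{(j)}_{1,1}|$ small there; then the expansion
\[
F(s,t) \;=\; e^{-b_{1,1}^{(j)}}\sum_{k=0}^{\infty} \frac{\bigl(-(\varphi_j(s,t)-b_{1,1}^{(j)})\bigr)^k}{k!}
\]
rearranges into a double Dirichlet series absolutely convergent on $\C_\eta^2$ by exactly the combinatorial bookkeeping used in the proof of Lemma~\ref{comp_op_double_Dirichlet_polynomial}. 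Because $F$ is already bounded on all of $\C_+^2$, the 2D version of Bohr's theorem (Theorem~2.7 of \cite{Nuestro}) yields that this Dirichlet series represents $F$ regularly on $\C_+^2$, so $F \in \HCdos$ with $\|F\|_\infty \leq 1$.

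To close the argument, if $F$ is constant then $\varphi_j$ is constant with $\re\varphi_j > 0$ and $\delta := \re\varphi_j$ suffices. Otherwise Lemma~\ref{sup_MDS_strictly_decreasing} applied with $\sigma_1=\sigma_2=\e/2 < \e = \eta_1 = \eta_2$ gives
\[
m \;:=\; \sup_{\C_\e^2} |F| \;<\; \sup_{\C_{\e/2}^2} |F| \;\leq\; \|F\|_\infty \;\leq\; 1,
\]
so $\re\varphi_j(s,t) \geq -\log m > 0$ on $\C_\e^2$ and we take $\delta := -\log m$. The only genuinely delicate point in the whole argument is the membership $F \in \HCdos$; once it is established, Lemma~\ref{sup_MDS_strictly_decreasing} delivers the conclusion at once.
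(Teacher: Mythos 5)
Your proof is correct and follows essentially the same route as the paper: the case $c_j+d_j\geq 1$ is handled by the positivity of $\re\varphi_j$ from Lemma~\ref{com_op_positive_range_varphi_double}, and the case $c_j=d_j=0$ by exponentiating $-\varphi_j$ to obtain a bounded, somewhere-convergent double Dirichlet series (the paper uses $2^{-\varphi_j}$ via Lemma~\ref{comp_op_double_Dirichlet_polynomial} where you use $e^{-\varphi_j}$) and then invoking the strict decrease of Lemma~\ref{sup_MDS_strictly_decreasing}. Your application of that lemma between the abscissas $\e/2$ and $\e$ is in fact slightly more careful about its hypothesis $\sigma_1,\sigma_2>0$ than the paper's direct comparison with $\sup_{\C_+^2}$, but the argument is the same.
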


\begin{proof}
Suppose $c_1 \neq 0$, that is $c_1 \in \N$. By Lemma~\ref{com_op_positive_range_varphi_double},
$\re \phi_1(s,t) = c_1 \re s + d_1 \re t + \re \varphi_1(s,t) > \e$ for $(s,t) \in \C_\e^2$, so for all $\e>0$, $\phi_1(\C_\e^2) \subset \C_\e$. The same argument applies in the case $d_1 \neq 0$. Now, if $c_1 = 0 =d_1$, then $\phi_1(s,t) = \varphi_1(s,t)$ for all $(s,t) \in \C_+^2$, so $\varphi_1 : \C_+^2 \rightarrow \C_+^2$ and $\re \varphi_1(s,t) > 0$ for all $(s,t) \in \C_+^2$. Now, if $\varphi_1$ is constant, then that constant has positive real part and the lemma is trivially satisfied. Otherwise we can apply Lemma~\ref{sup_MDS_strictly_decreasing} to $D(s,t)=2^{-\varphi_1(s,t)}$, which by Lemma~\ref{comp_op_double_Dirichlet_polynomial} is a non-constant double Dirichlet series. Therefore, given  $\e >0$,
$$\sup_{(s,t) \in \C_\e^2} |2^{-\varphi_1(s,t)}| = \sup_{(s,t) \in \C_\e^2} 2^{-\re \varphi_1(s,t)} < \sup_{(s,t) \in \C_+^2} 2^{-\re \varphi_1(s,t)} = \sup_{(s,t) \in \C_+^2} |2^{-\varphi_1(s,t)}|,$$
which implies that there exists some $\delta >0$ such that $$\inf_{(s,t) \in \C_\e^2} \re \varphi_1(s,t) > \delta > \inf_{(s,t) \in \C_+^2} \re \varphi_1(s,t) \geq 0,$$ that is, $\varphi_1(\C_\e^2) \subset \C_\delta$.
\end{proof}

\begin{remark} \label{rampell}
If $\sum_{m,n} a_{m,n} m^{-s} n^{-t}$ is a double Dirichlet series that converges at some $(s_{0}, t_{0}) \in \mathbb{C}^{2}$ then $\sup_{m,n} \big\vert \frac{a_{m,n}}{m^{s_{0}} n^{t_{0}}} \big\vert = K < \infty$ and
\[
\sum_{m,n=1}^{\infty} \Big\vert \frac{a_{m,n}}{m^{s_{0} + 1 + \varepsilon} n^{t_{0}+ 1 + \varepsilon}} \Big\vert \leq K \sum_{m=1}^{\infty} \frac{1}{m^{1+\varepsilon}} \sum_{n=1}^{\infty} \frac{1}{n^{1+\varepsilon}} < \infty \,.
\]
In other words, the Dirichlet series converges absolutely on $\mathbb{C}_{\re s_{0}+1} \times \mathbb{C}_{\re t_{0}+1}$.
\end{remark}

Now we are ready to prove that the symbols we are dealing with actually define composition operators, which is done in the following theorem. 

\begin{theorem} \label{Composition_Operators_on_H_inf(C_+^2)_sufficiency}
Let $ \phi = (\phi_1,\phi_2): \C_+^2 \rightarrow \C_+^2$ be a function such that $\phi_j(s,t)=c_j s + d_j t + \varphi_j(s,t)$ for $j=1,2$ and $(s,t) \in \C_+^2$, where $\varphi_j(s,t)=\sum_{m,n=1}^\infty \frac{b^{(j)}_{m,n}}{m^s n^t}$ converges regularly in $\C^2_+$ and $c_j,d_j \in \N_0$ for $j=1,2$. Then $\phi$ generates a composition operator $C_{\phi} : \mathcal{H}^\infty (\mathbb{C}^2_+) \rightarrow \mathcal{H}^\infty (\mathbb{C}^2_+)$. 
\end{theorem}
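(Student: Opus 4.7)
The plan is to imitate the proof of Proposition~\ref{Composition_Operators_on_H_inf(C_+)} in this two-variable setting, using Lemmas~\ref{comp_op_double_Dirichlet_polynomial} and \ref{range_phi_comp_op_double} as the main tools and closing with Bohr's theorem for double Dirichlet series. Fix $D\in\HCdos$ with regular expansion $D(s,t)=\sum_{k,l=1}^{\infty}a_{k,l}k^{-s}l^{-t}$, and for each $K,L\in\N$ consider the double Dirichlet polynomial $D_{K,L}(s,t)=\sum_{k=1}^{K}\sum_{l=1}^{L}a_{k,l}k^{-s}l^{-t}$. Since $\varphi_1,\varphi_2$ converge regularly on $\C_+^2$, Remark~\ref{rampell} provides some $\sigma>0$ on whose bidimensional half-plane $\C_\sigma^2$ both series converge absolutely. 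Hence Lemma~\ref{comp_op_double_Dirichlet_polynomial} applies, and each $D_{K,L}\circ\phi$ belongs to $\HCdos$ with a double Dirichlet expansion $\sum_{m,n=1}^{\infty}b_{m,n}^{(K,L)}m^{-s}n^{-t}$.

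Fix $\e>0$. By Lemma~\ref{range_phi_comp_op_double}, there is $\delta>0$ with $\phi(\C_\e^2)\subset\C_\delta^2$. The next step is to argue that the partial sums $D_{K,L}$ converge to $D$ uniformly on $\C_\delta^2$, a fact that, in the structure of $\HCdos$ developed in \cite{Nuestro}, plays the role of the classical Bohr equality between the abscissas of uniform convergence and of boundedness. Composing with $\phi$ then yields $\|D_{K,L}\circ\phi - D\circ\phi\|_{\C_\e^2}\to 0$ and, in particular, a uniform bound $\sup_{K,L}\|D_{K,L}\circ\phi\|_{\C_\e^2}\le C_\e$.

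With this uniform bound in hand, I would extract coefficients exactly as in the one-dimensional case. For each fixed $(m,n)$, the usual Cauchy-type integral estimates on the product half-plane yield
\[
|b_{m,n}^{(K,L)}|\le C_\e\, m^{\e} n^{\e}
\qquad\text{and}\qquad
|b_{m,n}^{(K,L)}-b_{m,n}^{(K',L')}|\le m^{\e} n^{\e}\,\|D_{K,L}\circ\phi-D_{K',L'}\circ\phi\|_{\C_\e^2},
\]
so that $\{b_{m,n}^{(K,L)}\}$ is Cauchy with some limit $b_{m,n}$ satisfying $|b_{m,n}|\le C_\e m^\e n^\e$. The formal series $F(s,t)=\sum_{m,n}b_{m,n}m^{-s}n^{-t}$ therefore converges absolutely, hence regularly and uniformly, on $\C_{1+\e}^2$, and term-by-term passage to the limit identifies $F$ with $D\circ\phi$ on that smaller half-plane. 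Because $\|D\circ\phi\|_{\C_+^2}\le\|D\|_\infty<\infty$, Bohr's theorem for double Dirichlet series (\cite[Theorem~2.7]{Nuestro}) then guarantees that $F$ is in fact regularly convergent on all of $\C_+^2$ and represents the bounded holomorphic function $D\circ\phi$, so $D\circ\phi\in\HCdos$.

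The main obstacle is the uniform convergence $D_{K,L}\to D$ on $\C_\delta^2$: in one variable this is classical, but in the double setting it has to be borrowed from the structural theory of $\HCdos$ in \cite{Nuestro}. Once that fact is available, both the coefficient extraction and the final invocation of Bohr's theorem are direct analogues of the one-variable arguments, and the only additional care required is the bookkeeping distinguishing regular from absolute convergence when passing from $\C_{1+\e}^2$ back to $\C_+^2$.
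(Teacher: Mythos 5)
Your proposal is correct and follows essentially the same route as the paper: partial sums composed with $\phi$ via Lemma~\ref{comp_op_double_Dirichlet_polynomial}, uniform boundedness on $\C_\e^2$ from Lemma~\ref{range_phi_comp_op_double} together with the uniform convergence of the partial sums of $D$ on $\C_\delta^2$ (which the paper obtains from \cite[Theorem~2.7]{Nuestro}, precisely the structural fact you flag as the main obstacle), coefficient extraction via the Cauchy-type bounds (\cite[Proposition~2.2]{Nuestro}), and the final identification of $F$ with $D\circ\phi$ by Bohr's theorem for double Dirichlet series. The one ingredient you left as a borrowed fact is exactly the one the paper cites, so there is no gap.
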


\begin{proof}
Take some $ D = \sum_{k,l=1}^\infty \frac{a_{k,l}}{k^s l^t}\in \mathcal{H}^\infty (\mathbb{C}^2_+)$ and let us see that $D \circ \phi \in \HCdos$. For each $(m,n)$ we denote $D_{m,n} =  \sum_{k,l=1}^m \sum_{l=1}^n \frac{a_{k,l}}{k^s l^t}$ (the partial sum of $D$) and use Lemma~\ref{comp_op_double_Dirichlet_polynomial} (note that by Remark~\ref{soldadet} $\phi$ is analytic, and by Remark~\ref{rampell} each $\varphi_{j}$ converges absolutely on $\mathbb{C}_{1}^{2}$) to have that $D_{m,n} \circ \phi$ is a double series in $\HCdos$, 
that we denote by $\sum_{k,l=1}^\infty \frac{c_{k,l}^{(m,n)}}{k^s l^t}$. \\
Now, by Lemma~\ref{range_phi_comp_op_double}, given $\e > 0$ there exists $\delta > 0$ such that $\phi(\C^2_\e) \subset \C^2_\delta$. As a consequence of \cite[Theorem~2.7]{Nuestro},  the partial sums $D_{m,n}$ are uniformly convergent on $\C^2_\delta$  to $D$. Therefore the sequence $\{D_{m,n} \circ \phi\}_{m,n}$ is uniformly bounded on $\C_\e^{2}$, say by $C$. This implies that
the horizontal translates defined by $\sum_{k,l=1}^\infty \frac{c_{k,l}^{(m,n)}}{k^{s+\e} l^{t+\e}}$ are in $\HCdos$ for every $m,n \in \N$. Applying now \cite[Proposition~2.2]{Nuestro}, which controls the coefficients of a series in $\HCdos$ by its norm, we get that for every $k,l,m,n \in \N$,
\[
\frac{|c_{k,l}^{(m,n)}|}{k^\e l^\e} 
\leq \sup_{(s,t) \in \C_\e^2} \vert D_{m,n}(\phi(s,t))  \vert 
\leq \sup_{(s,t) \in \C_\delta^2}  \vert D_{m,n}(s,t)  \vert  \leq C.
\]
Therefore for fixed $k$ and $l$ we have
\[
|c_{k,l}^{(m_1,n_1)} - c_{k,l}^{(m_2,n_2)}| 
\leq k^\e l^\e \sup_{(s,t) \in \C_\delta^2} \vert D_{m_1,n_1} (s,t) - D_{m_2,n_2} (s,t)\vert, 
\]
so the double sequence $\{c_{k,l}^{(m,n)}\}_{m,n}$ converges to some $c_{k,l} \in \C$ satisfying $|c_{k,l}| \leq C_\sigma k^\sigma l^\sigma$ for all $\sigma > \e$.
Define now $F(s,t)=\sum_{k,l=1}^\infty \frac{c_{k,l}}{k^s l^t}$. Since $|c_{k,l}| \leq C (kl)^{2\e}$, $F(s,t)$ converges absolutely in $\C_{ 1 + \e}^2$, and there the double sequence $\{D_{m,n} \circ \phi\}_{m,n}$ clearly converges absolutely to $F$. It is enough to note that $\Vert D \circ \phi\Vert_\infty \leq \Vert D \Vert_\infty < \infty$ to apply \cite[Theorem~2.7]{Nuestro} and get that $D \circ \phi$ actually coincides with $F$ and that it is in $\mathcal{H}^\infty(\C_+^2)$.
\end{proof}

\subsection{The necessary condition}

Theorem~\ref{Composition_Operators_on_H_inf(C_+^2)_sufficiency} gives the sufficient condition for the characterization of the composition operators of $\HCdos$ in Theorem~\ref{rural}. To prove the necessity we use the vector-valued perspective introduced in \cite{Nuestro} to deal with double Dirichlet series (formalized in Lemma~\ref{comp_op_fixing_variable_symbol}). But before that let us first recall some notation. Given $\sum_{m,n} \frac{a_{m,n}}{m^s n^t} \in \HCdos$, for each $m \in \mathbb{N}$ we define the row subseries $\alpha_m(t) = \sum_{n=1}^\infty \frac{a_{m,n}}{n^t}  \in \HC$. Then $D(s,t)= \sum_{m=1}^\infty \frac{\alpha_m(t)}{m^s}$ for every $(s,t) \in \C_+^2$.

\begin{lemma} \label{comp_op_fixing_variable_symbol}
Let $\phi= (\phi_1,\phi_2): \C_+^2 \rightarrow \C_+^2$ be inducing a composition operator $C_\phi : \HCdos \rightarrow \HCdos$.
For each fixed $t \in \C_+$ and $j=1,2$, consider $\phi_{j,t}: \C_+ \rightarrow \C_+$ given by $\phi_{j,t}(s)= \phi_j(s,t)$. Then $\phi_{j,t}$ defines a composition operator of $\HC$.
\end{lemma}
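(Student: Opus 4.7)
The plan is to reduce the one-variable claim to the two-variable hypothesis by a trivial lift. Fix $t_0 \in \C_+$ and $j \in \{1,2\}$; to fix ideas, take $j=1$, the case $j=2$ being entirely symmetric. Since $\phi(\C_+^2) \subset \C_+^2$, we automatically have $\phi_{1,t_0}(\C_+) \subset \C_+$, so the candidate symbol has the right range. Given $D(s) = \sum_k a_k/k^s \in \HC$, I would lift it to a double Dirichlet series $\tilde D \in \HCdos$ that depends only on the first variable: set $a_{m,1} = a_m$ and $a_{m,n} = 0$ for $n > 1$, so that
\[
\tilde D(s,t) \;=\; \sum_{m,n=1}^{\infty} \frac{a_{m,n}}{m^s n^t} \;=\; D(s).
\]
This series is trivially regularly convergent on $\C_+^2$ and satisfies $\Vert \tilde D\Vert_\infty = \Vert D\Vert_\infty$, hence $\tilde D \in \HCdos$. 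By hypothesis $C_\phi \tilde D \in \HCdos$, and since $\tilde D$ is independent of its second argument,
\[
(C_\phi \tilde D)(s,t) \;=\; D\bigl(\phi_1(s,t)\bigr).
\]

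The key remaining step is to observe that any $F \in \HCdos$ restricts to an element of $\HC$ on every horizontal slice $\{t = t_0\}$. Using the row decomposition $F(s,t) = \sum_{m=1}^{\infty} \alpha_m(t)/m^s$ with $\alpha_m \in \HC$ recalled immediately before the lemma's statement, regular convergence of the double series at $(s,t_0)$ yields that $\sum_m \alpha_m(t_0)/m^s$ converges on $\C_+$ to the bounded holomorphic function $F(\cdot, t_0)$ (bounded by $\Vert F\Vert_\infty$); Bohr's theorem for single Dirichlet series (see \cite[Theorem~1.3]{DeGaMaSe}) then places $F(\cdot, t_0)$ in $\HC$. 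Applying this to $F = C_\phi \tilde D$ gives
\[
D \circ \phi_{1,t_0} \;=\; (C_\phi \tilde D)(\,\cdot\,, t_0) \;\in\; \HC,
\]
so $\phi_{1,t_0}$ induces a composition operator on $\HC$, as desired.

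The argument is essentially bookkeeping once the lift $D \mapsto \tilde D$ is in place, and I do not foresee any serious obstacle. The one point that requires mild care is the slicing step, but the row-subseries decomposition together with the regular convergence built into the definition of $\HCdos$ turns it into a one-line verification; the rest is immediate from the hypothesis that $C_\phi$ maps $\HCdos$ into itself.
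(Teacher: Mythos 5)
Your proof is correct and follows essentially the same route as the paper: the same lift $D \mapsto \tilde D$ with $b_{m,1}=a_m$ and $b_{m,n}=0$ for $n\ge 2$, followed by restriction to the slice $t=t_0$. You are merely more explicit than the paper about why the horizontal slice of an element of $\HCdos$ lies in $\HC$ (via the row decomposition and Bohr's theorem), which the paper leaves implicit in the decomposition recalled just before the lemma.
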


\begin{proof}
We just deal with the case $j=1$, the other one being analogous. Take $D(s) = \sum_{m=1}^\infty \frac{a_m}{m^s} \in \HC$, and define $b_{m,1} = a_m$, $b_{m,n}=0$ for $n \geq 2$, and $\tilde{D}(s,t) = \sum_{m,n=1}^\infty \frac{b_{m,n}}{m^s n^t}$. Clearly $\tilde{D}(s,t) = D(s)$ for every $(s,t) \in \C_+^2$, and $\tilde{D} \in \HCdos$, so $\tilde{D} \circ \phi \in \HCdos$. Now,
$$(\tilde{D} \circ \phi) (s,t)= \sum_{m,n=1}^\infty \frac{b_{m,n}}{m^{\phi_1(s,t)} n^{\phi_2(s,t)}} = \sum_{m=1}^\infty \frac{a_m}{m^{\phi_1(s,t)}}.$$
Then, for a fixed $t \in \C_+$,
$$(D \circ \phi_{1,t})(s) = \sum_{m=1}^\infty \frac{a_m}{m^{\phi_{1,t}(s)}} = \sum_{m=1}^\infty \frac{a_m}{m^{\phi_1(s,t)}}=(\tilde{D} \circ \phi) (s,t),$$
so $D \circ \phi \in \HC$. 
\end{proof}

We still need a further lemma before we give the main step towards the necessity in Theorem~\ref{rural}.

\begin{lemma} \label{comp_op_Dirichlet_series_over_semigroups}
Consider $\varphi_2(s) = \sum_n \frac{a_n}{n^s}$ and $\varphi_3(s)=\sum_m \frac{b_m}{m^s}$ two Dirichlet series that converge absolutely in $\C_\sigma$ and 
let $\varphi$ be any function defined on $\C_\sigma$. If there exists $c_0 \in \N$ such that
\[
\sum_{n=1}^\infty a_n \left ( \frac{2^{c_0}}{n} \right )^s = 2^{c_0 s} \varphi_2(s) = \varphi(s) = 3^{c_0 s} \varphi_3(s) = \sum_{m=1}^\infty b_m \left ( \frac{3^{c_0}}{m} \right )^s\,
\]
for all $s \in \C_\sigma$, then $\varphi$ is also a Dirichlet series that converges absolutely in $\C_\sigma$.
\end{lemma}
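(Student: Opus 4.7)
The plan is to view $\varphi$ as a generalized Dirichlet series in two different ways and pin down the coefficients via a uniqueness principle. Rewriting the hypothesis as
\[
\varphi(s) \;=\; \sum_{n=1}^\infty a_n\, e^{-s\log(n/2^{c_0})} \;=\; \sum_{m=1}^\infty b_m\, e^{-s\log(m/3^{c_0})}
\]
on $\C_\sigma$, one exhibits two generalized Dirichlet series, with real frequencies bounded below by $-c_0\log 2$ and $-c_0\log 3$ respectively, that agree on a half-plane. The frequency $\log(n/2^{c_0})$ coincides with some $\log(m/3^{c_0})$ if and only if $3^{c_0}n = 2^{c_0}m$; since $\gcd(2^{c_0},3^{c_0}) = 1$, this forces $n = 2^{c_0}k$ and $m = 3^{c_0}k$ for some $k \in \N$.

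First I would establish the classical uniqueness principle for generalized Dirichlet series: if $\Lambda \subset \R$ is locally finite and bounded below, and $\sum_{\lambda\in\Lambda} c_\lambda e^{-s\lambda}$ converges absolutely on some half-plane $\C_{\sigma_0}$ and vanishes identically there, then every $c_\lambda$ is zero. The proof is routine: set $\lambda_0 = \min\{\lambda : c_\lambda\neq 0\}$, multiply the series by $e^{s\lambda_0}$, and let $\re s \to +\infty$ along the reals; since $|c_\lambda|e^{-\sigma(\lambda-\lambda_0)} \leq |c_\lambda|e^{-\sigma_0(\lambda-\lambda_0)}$ for $\sigma \geq \sigma_0$ and the latter majorant is summable by absolute convergence at $\sigma_0$, dominated convergence forces $c_{\lambda_0} = 0$, a contradiction.

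Applying this to the difference of the two representations of $\varphi$ above, whose combined frequency set is locally finite and bounded below by $-c_0\log 3$, we conclude that $a_n = 0$ whenever $n$ is not a multiple of $2^{c_0}$, and $a_{2^{c_0}k} = b_{3^{c_0}k}$ for every $k \in \N$. Setting $c_k := a_{2^{c_0}k}$ one obtains
\[
\varphi(s) \;=\; 2^{c_0 s}\varphi_2(s) \;=\; 2^{c_0 s}\sum_{k=1}^\infty \frac{c_k}{(2^{c_0}k)^s} \;=\; \sum_{k=1}^\infty \frac{c_k}{k^s}\,,
\]
an honest Dirichlet series. Its absolute convergence on $\C_\sigma$ is immediate: for $s \in \C_\sigma$ we have $\sum_k |c_k|\,k^{-\re s} = 2^{c_0 \re s}\sum_k |a_{2^{c_0}k}|\,(2^{c_0}k)^{-\re s} \leq 2^{c_0 \re s}\sum_n |a_n|\,n^{-\re s} < \infty$. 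The one step requiring care is the uniqueness principle; but with absolute convergence in hand and a locally finite, bounded-below frequency set, the dominated-convergence argument closes the matter cleanly.
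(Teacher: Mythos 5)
Your proof is correct, but it follows a genuinely different route from the paper's. The paper extracts the coefficients of $\varphi_2$ directly via the mean-value formula $a_j=\lim_{T\to\infty}\frac{1}{2T}\int_{\sigma+1-iT}^{\sigma+1+iT}\varphi_2(s)\,j^s\,ds$ (citing the coefficient-extraction result of Defant--Garc\'ia--Maestre--Sevilla-Peris), substitutes the representation $3^{c_0 s}\varphi_3(s)$, interchanges the sum with the limit by absolute convergence, and observes that each resulting mean value $\lim_T\frac{1}{2T}\int\bigl(\tfrac{3^{c_0}j}{2^{c_0}m}\bigr)^s ds$ vanishes because $3^{c_0}j/(2^{c_0}m)\neq 1$ when $2^{c_0}\nmid j$; this kills all coefficients $a_j$ with $j$ not a multiple of $2^{c_0}$, after which the conclusion is the same reindexing you perform. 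You instead recast both sides as generalized Dirichlet series with real, locally finite, bounded-below frequency sets and invoke a uniqueness principle, which you prove by isolating the minimal frequency in the support and letting $\re s\to+\infty$ with dominated convergence. Both arguments hinge on the same arithmetic fact, $\gcd(2^{c_0},3^{c_0})=1$. Your version is more self-contained (no appeal to the mean-value coefficient formula) and yields the extra information $a_{2^{c_0}k}=b_{3^{c_0}k}$ linking the two expansions, while the paper's stays entirely within ordinary Dirichlet series and is shorter given the cited formula. One cosmetic point: since absolute convergence is assumed on the open half-plane $\C_{\sigma_0}$, your summable majorant should be taken at some $\sigma_1>\sigma_0$ rather than at $\sigma_0$ itself; this does not affect the argument.
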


\begin{proof}
Let $j \in \N$ such that it is not a multiple of $2^{c_0}$. Then, using \cite[Proposition~1.9]{DeGaMaSe})
\begin{multline*}
a_j  = \lim_{T \rightarrow \infty} \frac{1}{2T} \int_{\sigma +1 -iT}^{\sigma +1 +iT} 2^{c_0 s} \varphi_2(s) \left (\frac{j}{2^{c_0}} \right )^s ds  
 = \lim_{T \rightarrow \infty} \frac{1}{2T} \int_{\sigma +1 -iT}^{\sigma +1 +iT} 3^{c_0 s} \varphi_3(s) \left (\frac{j}{2^{c_0}} \right )^s ds  \\ 
= \lim_{T \rightarrow \infty} \frac{1}{2T} \int_{\sigma +1 -iT}^{\sigma +1 +iT} \sum_{m=1}^\infty b_m \left( \frac{3^{c_0}}{m} \right )^s \left (\frac{j}{2^{c_0}} \right )^s ds 
 = \sum_{m=1}^\infty b_m \lim_{T \rightarrow \infty} \frac{1}{2T} \int_{\sigma +1 -iT}^{\sigma +1 +iT} \left( \frac{3^{c_0}}{2^{c_0}} \frac{j}{m} \right )^s ds = 0,
\end{multline*}
because $\dfrac{{3^{c_0}j}}{2^{c_0}m}$ is not an integer. Hence, all the coefficients of $\varphi_2$ corresponding to non-multiples of $2^{c_0}$ are null, so 
$$\varphi(s) = \sum_{n=1}^\infty a_n \left ( \frac{2^{c_0}}{n} \right )^s = \sum_{j=1}^\infty a_{j2^{c_0}} \left ( \frac{2^{c_0}}{j2^{c_0}} \right )^s = \sum_{j=1}^\infty \frac{a_{j2^{c_0}}}{j^s}.$$
Therefore $\varphi$ is a Dirichlet series which converges absolutely in $\C_\sigma$.
\end{proof}

\begin{theorem} \label{Comp_op_HCdos_necessity}
Let $\phi=(\phi_1, \phi_2): \C_+^2 \rightarrow \C_+^2$ be inducing a composition operator $C_\phi : \HCdos \rightarrow \HCdos$. Then there exists some $\sigma > 0$ such that, for $j=1,2$, $\phi_j(s,t) = c_0^{(j)}s + d_0^{(j)} t + \varphi_j(s,t)$ for $(s,t) \in \C_\sigma^2$, where $c_0^{(j)}, d_0^{(j)} \in \N_0$ and $\varphi_j(s,t) = \sum_{m,n=1}^\infty \frac{b_{m,n}^{(j)}}{m^s n^t}$ is a double Dirichlet series that converges absolutely in $\C_\sigma^2$.
\end{theorem}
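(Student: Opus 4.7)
The plan is to combine the one-dimensional characterization with a novel two-variable argument relying on Lemma~\ref{comp_op_Dirichlet_series_over_semigroups}. First, by Lemma~\ref{comp_op_fixing_variable_symbol}, for each fixed $t\in\mathbb{C}_+$ the map $s\mapsto \phi_j(s,t)$ induces a composition operator on $\HC$, so Theorem~\ref{Composition_Operators_on_H_inf(C_+)_FINAL} yields
\[
\phi_j(s,t) = c_0^{(j)}(t)\, s + \psi_t^{(j)}(s),
\]
with $c_0^{(j)}(t)\in\mathbb{N}_0$ and $\psi_t^{(j)}$ a Dirichlet series in $s$ converging uniformly on each $\mathbb{C}_\varepsilon$. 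The symmetric argument (fixing $s$ instead) produces $\phi_j(s,t)=d_0^{(j)}(s)\, t+\tilde\psi_s^{(j)}(t)$ with $d_0^{(j)}(s)\in\mathbb{N}_0$. Joint analyticity of $\phi_j$ on $\mathbb{C}_+^2$ then follows from Hartogs' theorem.

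The heart of the proof is to show that the integer-valued functions $c_0^{(j)}(t)$ and $d_0^{(j)}(s)$ are constant. Using that $k^{-\phi_j(s,t)}\in\HCdos$ for every $k\in\mathbb{N}$, I would expand $2^{-\phi_j}$ and $3^{-\phi_j}$ as double Dirichlet series $\sum_{m,n} b_{m,n}^{(j,k)} m^{-s} n^{-t}$ and regroup them by rows $A_m^{(j,k)}(t)=\sum_n b_{m,n}^{(j,k)} n^{-t}\in\HC$. For each fixed $t$ the factorization $k^{-\phi_j(s,t)} = k^{-c_0^{(j)}(t) s}\cdot k^{-\psi_t^{(j)}(s)}$ forces $A_m^{(j,k)}(t)=0$ unless $k^{c_0^{(j)}(t)}\mid m$, with the first nonzero row at $m=k^{c_0^{(j)}(t)}$. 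Combining the $k=2$ and $k=3$ expansions on a common half-plane of absolute convergence and invoking Lemma~\ref{comp_op_Dirichlet_series_over_semigroups} (whose semigroup conclusion rules out $t$-dependent shifts $2^{c_0^{(j)}(t)}$ and $3^{c_0^{(j)}(t)}$ that are simultaneously compatible with both expansions being honest double Dirichlet series), one forces $c_0^{(j)}(t)$ to be independent of $t$; the symmetric argument pins down $d_0^{(j)}(s)$.

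Once $c_0^{(j)}$ and $d_0^{(j)}$ are constants, set $\varphi_j(s,t)=\phi_j(s,t)-c_0^{(j)}s-d_0^{(j)}t$. This function is analytic on $\mathbb{C}_+^2$, is the Dirichlet series $\psi_t^{(j)}(s)-d_0^{(j)}t$ in $s$ for each fixed $t$, and is a Dirichlet series in $t$ for each fixed $s$. A double Cauchy-type integration (in the spirit of the proof of Lemma~\ref{comp_op_Dirichlet_series_over_semigroups} or \cite[Proposition~1.9]{DeGaMaSe}) extracts the double Dirichlet coefficients $b_{m,n}^{(j)}$, and growth estimates derived from the boundedness of $k^{-\varphi_j}$ on shifted half-planes yield absolute convergence on some $\mathbb{C}_\sigma^2$.

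The main obstacle is clearly the constancy of $c_0^{(j)}(t)$ and $d_0^{(j)}(s)$. In the one-variable situation these are automatically constant integers, whereas here they are a priori only $\mathbb{N}_0$-valued functions with no guarantee of continuity or analyticity. Establishing their constancy is where the argument genuinely goes beyond the methods of \cite{Gordon_Hedenmalm99} and \cite{Bayart02}, and is precisely the reason Lemma~\ref{comp_op_Dirichlet_series_over_semigroups} was proved.
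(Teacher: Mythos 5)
Your opening moves match the paper's: use Lemma~\ref{comp_op_fixing_variable_symbol} together with the one-variable characterization to write $\phi_j(s,t)=c_0^{(j)}(t)s+\psi^{(j)}_t(s)$ for each fixed $t$. But from there the proposal has two genuine gaps. First, the mechanism you offer for the constancy of $c_0^{(j)}(t)$ does not work as described. Lemma~\ref{comp_op_Dirichlet_series_over_semigroups} takes a \emph{constant} $c_0\in\N$ as a hypothesis and concludes that a function equal to both $2^{c_0 s}\varphi_2(s)$ and $3^{c_0 s}\varphi_3(s)$ is itself an absolutely convergent Dirichlet series; it says nothing that would rule out a $t$-dependent exponent, and invoking its ``semigroup conclusion'' to exclude $t$-dependent shifts is a gesture, not an argument. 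The paper instead observes that $k^{c_0(t)}=\inf\{m:\alpha_m^{(k)}(t)\neq 0\}$, where $\alpha_m^{(k)}$ are the row series of $k^{-\phi}\in\HCdos$, takes $c_0=\inf\{p:\alpha_{k^p}^{(k)}\not\equiv 0\}$, and uses \cite[Lemma~3.1]{Gordon_Hedenmalm99} to produce a half-plane $\overline{\C_{\sigma_0}}$ on which the first not-identically-zero row series $\alpha_{k^{c_0}}^{(k)}$ has no zeros; this pins $c_0(t)=c_0$ there. Note that this only gives constancy on a far-right half-plane, which is exactly why the theorem's conclusion is stated on $\C_\sigma^2$ rather than on all of $\C_+^2$.

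Second, and more seriously, the final step is where almost all of the work lies and you dismiss it in one sentence. Knowing that $\varphi_j$ is a Dirichlet series in $s$ for each fixed $t$ and a Dirichlet series in $t$ for each fixed $s$ does not make it a double Dirichlet series: one must show that each coefficient $c_m(t)$ of the expansion in $s$ is itself a Dirichlet series in $t$, absolutely convergent on a \emph{common} half-plane. A priori $c_m(t)$ is just some function of $t$, and no ``double Cauchy-type integration'' extracts coefficients $b_{m,n}$ until this is known. The paper proves it by induction on $m$: expanding $k^{-\phi(s,t)}=k^{-c_0 s}k^{-c_1(t)}\prod_m(1+\cdots)$ and identifying coefficients, the case $m=1$ gives $k^{-c_1(t)}=\alpha_{k^{c_0}}^{(k)}(t)\in\HC$ for every $k$, whence Lemma~\ref{Comp_Op_monomials} yields $c_1(t)=d_0t+\sum_n b_{1,n}n^{-t}$ — this is where the linear term in $t$ actually comes from, not from a symmetric decomposition in $s$ and $t$ that would then need to be reconciled. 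For $m\geq 2$ one solves for $c_m(t)=k^{d_0t}\psi_k(t)$ with $\psi_k$ an absolutely convergent Dirichlet series, and it is \emph{here}, comparing $k=2$ with $k=3$, that Lemma~\ref{comp_op_Dirichlet_series_over_semigroups} is used: its role is to strip off the factor $k^{d_0t}$ and show that $c_m(t)$ is a genuine Dirichlet series with no linear term, not to establish the constancy of $c_0^{(j)}(t)$. Without this induction your proof does not reach the stated conclusion.
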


\begin{proof}
The proof works for $j=1$ or $j=2$ so, to keep the notation simpler, we will drop the subscript and consider $\phi(s,t): \C_+^2 \rightarrow \C_+$. On the one hand, by hypothesis $D_k(s,t):= k^{-\phi(s,t)} \in \HCdos$ for every $k \in \N$. Using the regular convergence of $D_k$, if $t \in \C_+$ is fixed,
\begin{equation} \label{serotonina}
k^{-\phi(s,t)} = \sum_{m,n=1}^\infty \frac{a_{m,n}^{(k)}}{m^s n^t} = \sum_{m=1}^\infty \frac{1}{m^s} \sum_{n=1}^\infty \frac{a_{m,n}^{(k)}}{n^t} = \sum_{m=1}^\infty \frac{\alpha^{(k)}_m(t)}{m^s} = D_{k,t}(s).
\end{equation}
On the other hand, for $t \in \C_+$ still fixed, Lemma~\ref{comp_op_fixing_variable_symbol} gives that $\phi_t$ defines a composition operator of $\HC$, so by Theorem~\ref{Composition_Operators_on_H_inf(C_+)_FINAL} $\phi_t(s) = c_0(t) s + \varphi_t(s)$, where $c_0(t) \in \N_0$ and $\varphi_t(s) = \sum_{m=1}^\infty \frac{c_m(t)}{m^s}$ is a Dirichlet series that converges regularly and uniformly in the half-plane $\C_\e$ for every $\e > 0$. 

We first show that $t\mapsto c_0(t)$ is constant on some half-plane $\overline{\C_{\sigma_0}}$. Using the  argument in \cite[page~316]{Gordon_Hedenmalm99}
\[
k^{c_0(t)}  = \inf\big(\{m\in\N\ :\ \alpha_m^{(k)}(t)\neq 0\}\big).
\]
This means that the series in \eqref{serotonina} actually runs up for $m \geq k^{c_{0}}$ and
\[
c_0(t)=\inf\big(\{p\in\N\ :\ \alpha_{k^p}^{(k)}(t)\neq 0\}\big).
\]
Define
$$c_0=\inf\big(\{p\in\N\ :\ \alpha_{k^p}^{(k)}\textrm{ is not identically zero}\}\big).$$
Let $a_{k^{c_0},N}^{(k)}$ be the first non-zero coefficient of $\alpha_{k^{c_0}}^{(k)}$. By  \cite[Lemma~3.1]{Gordon_Hedenmalm99} we can find $\sigma_0 > 0$ such that 
\[
\sup_{t \in \overline{\C_{\sigma_0}}} |N^t \alpha_{k^{c_0}}^{(k)}(t) - a_{k^{c_0},N}^{(k)}| \leq \dfrac{|a_{k^{c_0},N}^{(k)}|}{2}.
\]
Hence $\alpha_{k^{c_0}}^{(k)}$ has no zeros in the half-plane $\overline{\C}_{\sigma_0}$ and therefore $c_0(t) = c_0$ for every $t \in \overline{\C}_{\sigma_0}$. \\

Since for each $t$ the Dirichlet series $\varphi_{t}$ converges uniformly on every half-plane strictly contained in $\mathbb{C}_{+}$,
\cite[Theorem~4.4.2]{Queffelec13} (see also \cite[Proposition~1.10]{DeGaMaSe}) implies that $\varphi_t(s)$ is absolutely convergent for every $s$ with $\re s > 1/2$. Take, then, $(s,t) \in \C_{\frac{1}{2}} \times \C_{\sigma_0}$. Following \cite[Theorem~A]{Gordon_Hedenmalm99} and proceeding as in Lemma~\ref{comp_op_double_Dirichlet_polynomial} (using the Taylor expansion of the exponential) we arrive at
\begin{equation} \label{sabotatge}
\sum_{m=1}^\infty \frac{\alpha^{(k)}_m(t)}{m^s} = k^{-\phi(s,t)} = k^{-c_0 s} k^{-c_1(t)} \prod_{m=2}^\infty \left ( 1+ \sum_{j=1}^\infty \frac{(-1)^{-j} (c_m(t))^j (\log k)^j}{j! (m^j)^s} \right ) \,.
\end{equation}
Expanding the product at the right-hand side yields a series whose terms we can rearrange (because all the involved series are absolutely convergent), into a Dirichlet series the coefficients of which we denote by  $d_l^{(k)}(t)$. Hence
\begin{equation} \label{mamimiraamunt}
\sum_{m=1}^\infty \frac{\alpha^{(k)}_m(t)}{m^s} 
= \frac{k^{-c_1(t)}}{k^{c_0 s}} \left( 1+ \sum_{l=2}^\infty \frac{d_l^{(k)}(t)}{l^s} \right)
= \frac{k^{-c_1(t)}}{k^{c_0 s}} + \sum_{l=2}^\infty \frac{k^{-c_1(t)} d_l^{(k)}(t)}{(lk^{c_0})^s}.
\end{equation}
So, we have arrived at an equality between Dirichlet series that converge absolutely on some half-plane and we may identify coefficients (recall that we already saw that the series in the left-hand side in fact starts at $k^{c_{0}}$). To begin with, $k^{-c_1(t)}$ is the coefficient
corresponding to the term $(k^{c_0})^s$, so $k^{-c_1(t)} = \alpha_{k^{c_0}}^{(k)}(t)$ for all $t \in \C_{\sigma_0}$, and $\alpha_{k^{c_0}}^{(k)} \in \HC$. Since this holds for every  $k$, we can apply
Lemma~\ref{Comp_Op_monomials} to get that $c_1(t)$ is holomorphic in $\mathbb{C}_{\sigma_0}$ and that there exists some $\sigma_1 \geq \sigma_0$ such that $c_1(t) = d_0 t + \sum_{n=1}^\infty \frac{b_{1,n}}{n^t}$ for every $t \in \C_{\sigma_1}$, with $\sum_{n=1}^\infty \frac{b_{1,n}}{n^t}$ absolutely convergent in $\C_{\sigma_1}$.\\

What we want to do now is to push further this idea, comparing coefficients in \eqref{mamimiraamunt} in a systematic way to end up showing that every $c_m(t)$ can be written as a Dirichlet series absolutely convergent in $\C_{\sigma_1}$. We do this by induction on $m \geq 2$ and start with the case $m=2$. \\
We take some $(s,t) \in \C_{\frac{1}{2}} \times \C_{\sigma_1}$ and note that the term corresponding to $l=2$ in \eqref{mamimiraamunt} is obtained by multiplying the term $m=2$ and $j=1$ (this carries $c_{2}(t)$) and $1$'s in \eqref{sabotatge}. In this way we have
\[
\sum_{m=1}^\infty \frac{\alpha^{(k)}_m(t)}{m^s}
= \frac{k^{-c_1(t)}}{k^{c_0 s}} + \frac{-\log k \, c_2(t) k^{-c_1(t)}}{(2k^c_0)^s} + \sum_{l=3}^\infty \frac{k^{-c_1(t)} d_l^{(k)}(t)}{(lk^{c_0})^s}.
\]
Identifying again coefficients we get $\alpha_{2k^{c_0}}^{(k)}(t) = -\log k \, c_2(t) k^{-c_1(t)}$, so \[
c_2(t) = \frac{-1}{\log k} k^{c_1(t)} \alpha_{2k^{c_0}}^{(k)} = k^{d_0 t} \frac{-1}{\log k} k^{-(-\sum_{n=1}^\infty \frac{b_{1,n}}{n^t})} \alpha_{2k^{c_0}}^{(k)} = k^{d_0 t} \psi_k(t)\,.
\]
We need to see now that $\psi_k(t)$ is a Dirichlet series that converges absolutely in $\C_{\sigma_1}$. Note first that $\alpha_{2k^{c_0}}^{(k)}$ belongs to  $\HC$. 
On the other hand,  we have just seen that $-\sum_{n=1}^\infty b_{1,n} n^{-t}$ is an absolutely convergent series in $\C_{\sigma_1}$, and a
careful  inspection of the proof of the sufficiency of \cite[Theorem~A]{Gordon_Hedenmalm99} shows that $\displaystyle{k^{\sum_{n=1}^\infty \frac{b_{1,n}}{n^t}}}$ is an absolutely convergent Dirichlet series in $\C_{\sigma_1}$. This gives the claim. Letting now $k=2,3$ we have $ 2^{c_0 t} \psi_2(t) = c_2(t) = 3^{c_0 t} \psi_3(t)$ for every $t \in \C_{\sigma_1}$. 
Since $c_{2}(t)$ is analytic,
Lemma~\ref{comp_op_Dirichlet_series_over_semigroups} gives that $c_2(t) = \sum_{n=1}^\infty \frac{b_{2,n}}{n^t}$ is a Dirichlet series that converges absolutely in $\C_{\sigma_1}$. 
This completes the proof of the fact for $m=2$.

Suppose now that $c_{m}(t)$ is analytic for every $2 \leq m \leq m_{0}$. We want to use again \eqref{mamimiraamunt}, comparing the coefficients of the term corresponding to $l=m_{0}$. Note that we get this factor by multiplying the term $m=m_{0}$ and $j=1$ with all $1$'s (this brings $c_{m_{0}}(t)$) and the product of terms involving divisors of $m_{0}$ (this brings other $c_{m}(t)$'s, that we group in a term $D_{m_{0}}$). Let us be more precise. Starting from \eqref{sabotatge} we get
\[
\sum_{m=1}^\infty \frac{\alpha^{(k)}_m(t)}{m^s} 
 =  \frac{-\log k \, c_{m_{0}}(t) k^{-c_1(t)} + D_{m_{0}(t)}}{(m_{0}k^{c_0})^s} + \sum_{l \neq m_{0}}^\infty \frac{k^{-c_1(t)} d_l^{(k)}(t)}{(lk^{c_0})^s},
\]
where $D_{m_{0}}$ is given by
\[
D_{m_{0}}(t) = \sum_{q>1} \sum_{m_1^{r_1} \cdots m_q^{r_q}=m_{0}}  \prod_{h=1}^q \frac{\big(- \log k \, c_{m_h}(t) \big)^{r_h}}{r_h !}.
\]
Since $m_1^{r_1} \cdots m_q^{r_q} = m_{0}$ for $q>1$ implies that $m_h < m_{0}$ for all $1 \leq h \leq q$ we have $D_{m_{0}}$ is a finite sum of finite products of Dirichlet series which by the induction hypothesis are absolutely convergent in $\C_{\sigma_1}$. Hence $D_{m_{0}}$ is a Dirichlet series that converges absolutely on $\C_{\sigma_1}$. Then 
\[
c_{m_{0}}(t) = \frac{-k^{-c_1(t)}}{\log k}(\alpha_{m_{0}k^{c_0}}^{(k)} - D_m(t))
= k^{c_0 t} \frac{-1}{\log k}k^{(\sum_{n=1}^\infty \frac{b_{1,n}}{n^t})}(\alpha_{m_{0}k^{c_0}}^{(k)} - D_{m_{0}}(t))
=k^{c_0 t} \psi^{(m_{0})}_k(t),
\]
where, with the same argument as above, $\psi^{(m_{0})}_k$ is again an absolutely convergent Dirichlet series on $\C_{\sigma_1}$. Once again by application of Lemma~\ref{comp_op_Dirichlet_series_over_semigroups}, $c_{m_{0}}(t) = \sum_{n=1}^\infty \frac{b_{m_{0},n}}{n^t}$ is  a Dirichlet series that converges absolutely on $\C_{\sigma_1}$.\\

Finally, for $(s,t) \in \C_{\frac{1}{2}} \times \C_{\sigma_1}$,
$$\phi(s,t) = c_0 s + d_0 t + \sum_{m=1}^\infty \frac{1}{m^s} \sum_{n=1}^\infty \frac{b_{m,n}}{n^t} = c_0 s + d_0 t +  \sum_{m,n=1}^\infty \frac{b_{m,n}}{m^s n^t},$$
where the last equality holds because the sums converge absolutely in $\C_{\frac{1}{2}} \times \C_{\sigma_1}$.
\end{proof}

Once we have established the form of the symbols of composition operators we may proceed as in \cite{Queffelec15} to strengthen the conditions on the symbol in terms of its uniform convergence in $\C_\e^2$ for every $\e > 0$, giving the proof of our main result.

\begin{proof}[Proof of Theorem~\ref{rural}]
To begin with, if $\phi_{j}$ is as in \eqref{quinquennal} and $\varphi_{j}$ converges uniformly and regularly on $\mathbb{C}_{\varepsilon}$ for every $\varepsilon >0$, then it converges regularly on
$\mathbb{C}_{+}$ and Theorem~\ref{Composition_Operators_on_H_inf(C_+^2)_sufficiency} gives that $C_{\phi}$ defines a composition operator on $\HCdos$.\\
For the necessary condition, Theorem~\ref{Comp_op_HCdos_necessity} gives that each $\varphi_{j}$ converges absolutely on $\mathbb{C}_{\sigma}^{2}$ for some $\sigma >0$.
We adapt the arguments of \cite[Section~3]{Queffelec15} to see that in fact they converge uniformly on $\mathbb{C}_{\varepsilon}^2$
(we do it only for $j=1$). Note first that Lemma~\ref{comp_op_fixing_variable_symbol}, Lemma~\ref{Comp_Op_monomials} and Hartog's theorem give that $\phi$ is analytic. 
Then, by Lemma~\ref{com_op_positive_range_varphi_double} (recall also Remark~\ref{soldadet}), we get $\varphi_1(\C_+^2) \subset \overline{\C_+}$, that is, $|\Arg \varphi_1 | \leq \frac{\pi}{2}$ and  $|\Arg \varphi_1^{1/2} | \leq \frac{\pi}{4}$, from which $\left |\frac{\im \varphi_1^{1/2}}{\re \varphi_1^{1/2}}\right | =  |\tan(\Arg \varphi_1^{1/2}) | \leq 1$ follows. \\
We consider now the function $u(s,t) = \re (\varphi_1(s,t)^{1/2})$. Since $\varphi_{1}$ converges absolutely on $\mathbb{C}_{\sigma}^{2}$ it is uniformly bounded there. Now it is easy to see that $|\re \varphi_1^{1/2}| \leq |\re \varphi_1|^{1/2} \leq |\varphi_1|^{1/2} \leq \sqrt{2}|\re \varphi_1^{1/2}|$, because $|\varphi_1^{1/2}| = \sqrt{ \re \varphi_1 + \im \varphi_1} \leq \sqrt{2 \re \varphi_1}$, and then $u$ is uniformly bounded on $\overline{\C_\theta^2}$, say by $K$. Our aim now is to see that, in fact, $u$ is uniformly bounded on $\mathbb{C}_{\varepsilon}$ for every $\varepsilon >0$. By Remark~\ref{comp_op_double_range_re_varphi} either $\varphi_{1}$ is identically zero or $\re \varphi_{1}(s,t) >0$ for every $(s,t) \in \mathbb{C}_{+}^{2}$. If it is identically zero, then so also is $\varphi_{1}^{1/2}$ and therefore $u$. If this is not the case, then (since $|\Arg \varphi_1^{1/2} | \leq \frac{\pi}{4}$) $u$ is strictly positive.\\
If $u$ is identically zero then the claim is trivially satisfied. We may then assume that $u$ is positive and take $(s_{0},t_{0}) =(\sigma_1 + i\tau_1, \sigma_2 + i\tau_2) \in \C_\e^2$. We know from \cite[Section~3]{Queffelec15} that if $v$ is a positive harmonic function defined on some $\mathbb{C}_{\sigma_{0}}$, then
\begin{equation} \label{harnack}
v(\theta_{1} + i \tau) \leq \frac{\theta_{2}}{\theta_{1} }v(\theta_{2} + i \tau)
\end{equation}
for every $\sigma_0 < \theta_{1} \leq \theta_{2}$. Suppose that $\varepsilon< \sigma_{1} < \sigma$ and consider $u_{t_{0}} (s) = u(s,t_{0}) = \re  (\varphi_{1,t_{0}}(s)^{1/2})$. Suppose now that $\varepsilon < \sigma_{1} < \sigma$ then, since $u_{t_{0}}$ is a positive harmonic function \eqref{harnack} gives
\begin{equation} \label{kapsberger}
u(s_{0}, t_{0}) = u_{t_{0}} (\sigma_{1} + i \tau_{1})
\leq \frac{\sigma}{\sigma_1} u_t(\sigma + i\tau_1) \leq \frac{\sigma}{\e}u(\sigma + i\tau_1, \sigma_2 + i\tau_2) \,.
\end{equation}
We distinguish two cases for $t_{0}$. First, if $\sigma \leq \sigma_{2}$ we immediately obtain (recall that $u$ is uniformly bounded on $\overline{\mathbb{C}_{\sigma}^{2}}$ by $K$)
\[
u(s_{0}, t_{0}) \leq \frac{\sigma}{\e} K \,.
\]
On the other hand, if $\e < \sigma_2 < \sigma$, we can consider $\tilde{s} = \theta + i\tau_1 \in \overline{\C_\theta}$ and $t\mapsto u_{\tilde{s}}(t)$, which is again a positive harmonic function.
Starting with \eqref{kapsberger} and using \eqref{harnack} again (this time for $u_{\tilde{s}}$) we get
\[
u(s_{0}, t_{0}) \leq  \frac{\sigma}{\e} u_{\tilde{s}} (\sigma_2 + i\tau_2)
\leq  \frac{\sigma^2}{\e^2} u_{\tilde{s}}(\sigma + i\tau_2) \leq \frac{\sigma^2}{\e^2} K.
\]
The only case left to check is that in which $\e < \sigma_2 < \sigma$ and $\sigma_1 \geq \sigma$, but it is completely analogous to the one above, so we get $u(s,t) \leq \frac{\theta^2}{\e^2} K$ for every $(s,t) \in \C_\e$. Hence,
$\varphi_1^{1/2}$, and therefore $\varphi_1$, is uniformly bounded in $\C_\e$ and consequently uniformly convergent.
\end{proof}

The characterization of compact composition operators on $\HC$ given in \cite[Theorem~18]{Bayart02} still works for double Dirichlet series.

\begin{theorem}
Let $\phi$ define a continuous composition operator $C_{\phi} : \mathcal{H}^\infty (\mathbb{C}^2_+) \rightarrow \mathcal{H}^\infty (\mathbb{C}^2_+)$. Then $C_{\phi}$ is compact if and only if $\phi(\C_+^2) \subset \C_\delta^2$ for some $\delta >0$.
\end{theorem}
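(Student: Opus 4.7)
The plan is to mimic the proof of \cite[Theorem~18]{Bayart02} in the double Dirichlet series setting. For the necessity direction, I would assume that $C_\phi$ is compact and suppose, for contradiction, that $\phi(\C_+^2) \not\subset \C_\delta^2$ for every $\delta > 0$. Then $\inf_{(s,t)\in\C_+^2} \re \phi_j(s,t) = 0$ for some $j \in \{1,2\}$; say $j = 1$. Consider the sequence $D_n(s,t) = n^{-s}$, which lies in the unit ball of $\HCdos$ and converges to $0$ pointwise on $\C_+^2$. One has $\|C_\phi(D_n)\|_\infty = \sup_{(s,t) \in \C_+^2} n^{-\re \phi_1(s,t)} = 1$ for every $n$; on the other hand, any $\HCdos$-norm convergent subsequence of $(C_\phi(D_n))$ must converge pointwise to $0$ and therefore, by uniform convergence on $\C_+^2$, to $0$ in norm, contradicting compactness.

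For the sufficiency direction, assume $\phi(\C_+^2) \subset \C_\delta^2$ and introduce the horizontal translation operator $R \colon \HCdos \to \HCdos$ defined by $R(D)(s,t) = D(s+\delta, t+\delta)$ (a contraction), together with $\psi = \phi - (\delta, \delta)$. Since $\phi$ has the form \eqref{quinquennal} by Theorem~\ref{rural} and subtracting the constant $(\delta, \delta)$ only modifies the constant terms of $\varphi_1$ and $\varphi_2$, the map $\psi$ also has that form and satisfies $\psi(\C_+^2) \subset \C_+^2$. Hence Theorem~\ref{rural} guarantees that $C_\psi$ is a bounded composition operator on $\HCdos$, and the identity $C_\phi = C_\psi \circ R$ reduces the problem to proving the compactness of $R$.

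To establish the compactness of $R$ I would use the Bohr-type identification $\mathcal{B}_2 \colon H^\infty(B_{c_0^{2}}) \to \HCdos$ provided by \cite[Theorem~3.5]{Nuestro}. A direct coefficient computation, analogous to the one-variable argument that underlies Proposition~\ref{Proposition_comp_ops_HCdos_H(B_c_0dos)}, shows that $\mathcal{B}_2^{-1} \circ R \circ \mathcal{B}_2$ coincides with the composition operator with symbol $\Psi_\delta(z,w) = \bigl((p_k^{-\delta} z_k)_k, (p_k^{-\delta} w_k)_k\bigr)$, where $(p_k)_k$ enumerates the primes. Since $p_k^{-\delta} \to 0$, the image $\Psi_\delta(B_{c_0^{2}})$ lies inside the compact set $K_\delta \times K_\delta$, where $K_\delta = \{z \in c_0 : |z_k| \leq p_k^{-\delta}\ \text{for all } k\} \subset B_{c_0}$. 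A bounded sequence in $H^\infty(B_{c_0^{2}})$ is equicontinuous on $K_\delta \times K_\delta$ by the Cauchy estimates, hence, via Arzelà--Ascoli, admits a subsequence converging uniformly there; composing with $\Psi_\delta$ then produces a subsequence of $(C_{\Psi_\delta} f_n)$ converging uniformly on $B_{c_0^{2}}$. Transferring back via $\mathcal{B}_2$, $R$ is compact, and so is $C_\phi = C_\psi \circ R$.

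The main obstacle I anticipate is justifying rigorously the Montel-type normal family argument in the infinite-dimensional setting of $H^\infty(B_{c_0^{2}})$ (which requires invoking the Cauchy estimates for holomorphic functions on open subsets of $c_0^2$ to get equicontinuity on $K_\delta \times K_\delta$); beyond this, every step is a faithful adaptation of the one-variable case.
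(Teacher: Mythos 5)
Your necessity argument is essentially the paper's: the paper also tests compactness against the sequence $D_m(u,v)=m^{-u}$ (and then $n^{-v}$), shows that any limit of a norm-convergent subsequence of $C_\phi(D_{m_k})$ must vanish pointwise, and deduces $\inf_{\C_+^2}\re\phi_1>0$; your contradiction phrasing is an equivalent packaging of the same computation. For sufficiency, however, you take a genuinely different and considerably longer route. The paper's proof is two lines: given a bounded sequence $\{D_n\}$ in $\HCdos$, it invokes the Montel-type statement \cite[Lemma~3.4]{Nuestro} to extract a subsequence converging uniformly on $\C_\delta^2$ to some $D\in\HCdos$, and then $D_{n_k}\circ\phi\to D\circ\phi$ uniformly on $\C_+^2$ because $\phi(\C_+^2)\subset\C_\delta^2$. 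No factorization through a translation, no use of Theorem~\ref{rural}, and no Bohr lift are needed. Your route instead reconstructs that Montel property by hand: you factor $C_\phi=C_\psi\circ R$ (which is correct, and $C_\psi$ is indeed bounded by Theorem~\ref{rural} since subtracting $(\delta,\delta)$ only shifts the constant coefficients $d_{1,1}^{(j)}$), and you prove compactness of $R$ via Arzel\`a--Ascoli on $K_\delta\times K_\delta$ in $H^\infty(B_{c_0^2})$. That argument works, but the step you describe as ``a direct coefficient computation'' --- identifying $\mathcal B_2^{-1}\circ R\circ\mathcal B_2$ with $C_{\Psi_\delta}$ --- is not direct on all of $H^\infty(B_{c_0^2})$: monomial expansions of bounded holomorphic functions need not converge at the points of $K_\delta$ when $\delta\le 1/2$ (the sequence $(p_k^{-\delta})_k$ is then not in $\ell^2$), so passing from polynomials to general $f$ requires the density-plus-continuity machinery of Proposition~\ref{Proposition_comp_ops_HCdos_H(B_c_0dos)}. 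The cleanest repair is simply to apply that proposition to the symbol $\tau_\delta(s,t)=(s+\delta,t+\delta)$, which is of the form \eqref{quinquennal}, and then read off $\Psi_\delta$ from its action on the coordinate Dirichlet series $p_j^{-s}$, $p_j^{-t}$. In short: your proof is correct modulo that justification, but it spends Theorem~\ref{rural} and the Bohr transform on a statement the paper gets for free from the intrinsic normal-family lemma for $\HCdos$; what your approach buys in exchange is an explicit exhibition of $C_\phi$ as (bounded)$\,\circ\,$(compact), which makes the mechanism of compactness more transparent.
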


\begin{proof}
First, suppose that there exists some $\delta >0$ such that $\phi(\C_+^2) \subset \C_\delta^2$ and consider $\{D_n\}$ a bounded sequence in $\HCdos$. By \cite[Lemma~3.4]{Nuestro}, there exists some subsequence $\{D_{n_k} \}$ in $\HCdos$ and $D \in \HCdos$ such that $D_{n_k}$ converges uniformly to $D$ on $\C_\delta^2$, and therefore $D_{n_k} \circ \phi$ converges to $D \circ \phi$ uniformly on $\C_+^2$. Hence, $C_\phi$ is compact. On the other hand, if $C_\phi$ is compact, choose the sequence $D_m(u,v)=m^{-u}$ in $\HCdos$. Since $C_\phi$ is compact there exists some subsequence $D_{m_k}$ in $\HCdos$ and some $\tilde{D} \in \HCdos$ such that $\lim_{k \rightarrow \infty} \Vert D_{m_k} \circ \phi - \tilde{D} \Vert_\infty =0$. Fix $(s,t) \in \C_+^2$, then $\tilde{D}(s,t) = \lim_{k \rightarrow \infty} m_k^{- \phi_1(s,t)} = 0$ since $\re \phi_1(s,t) > 0$. Therefore
\[
0 = \lim_{k \rightarrow \infty} \Vert D_{m_k} \circ \phi - \tilde{D} \Vert_\infty 
= \lim_{k \rightarrow \infty} \Vert D_{m_k} \circ \phi \Vert_\infty =\lim_{k \rightarrow \infty} m_k^{- \inf_{(s,t) \in \C_+^2} \re \phi_1(s,t)},
\]
so necessarily $\inf_{(s,t) \in \C_+^2} \re \phi_1(s,t) > 0$ and there exists some $\delta_1 > 0$ such that $\phi_1(\C_+^2) \subset \C_{\delta_1}$. Applying the same idea to the sequence of functions $n^{-v}$ in $\HCdos$, one gets that there exists some $\delta_2 >0$ such that $\phi_2(\C_+^2) \subset \C_{\delta_2}$. Taking $\delta = \min (\delta_1, \delta_2)$ we have $\phi(\C_+^2) \subset \C_\delta^2$.
\end{proof}

\subsection{Composition operators on $\HCdos$ and on $H^\infty(B_{c_0^2})$}

We finish this section by relating composition operators of $\HCdos$ and composition operators on the corresponding space of holomorphic functions.
Observe that this result is new even for the one dimensional case (this is Proposition~\ref{Comp_op_H_infty_B_c_0}) and that it is more difficult 
than the corresponding result for $\mathcal H^p$ and $H^p(\T^\infty)$, $1\leq p<+\infty$ obtained in \cite{Bayart02}. Indeed, for finite values of $p$,
Dirichlet polynomials are dense in $\mathcal H^p$ and this was a key point in the proof done in \cite{Bayart02}.

\begin{proposition} \label{Proposition_comp_ops_HCdos_H(B_c_0dos)}
Let $\phi$ define a continuous composition operator $C_{\phi} : \mathcal{H}^\infty (\mathbb{C}^2_+) \rightarrow \mathcal{H}^\infty (\mathbb{C}^2_+)$. Then $C_\phi$ induces a composition operator $C_\psi: H^\infty(B_{c_0^2}) \rightarrow H^\infty(B_{c_0^2})$.
\end{proposition}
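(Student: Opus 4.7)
The plan is to mirror the one-variable proof sketched (in the commented-out block) for Proposition~\ref{Comp_op_H_infty_B_c_0}, using the Bohr transform $\mathcal{B}:H^\infty(B_{c_0^2})\to\HCdos$ from \cite[Theorem~3.5]{Nuestro}. Set $T=\mathcal{B}^{-1}\circ C_\phi\circ\mathcal{B}$ and build a candidate symbol as follows. Let $(p_j)$ denote the sequence of primes. For $k=1,2$ and $j\in\N$, note that $p_j^{-\phi_k(s,t)}\in\HCdos$ (since $p_j^{-u}\in\HC$, this is well-defined by hypothesis and its norm is bounded by $1$), and set
\[
\Psi_k(z,w)=\bigl(\mathcal{B}^{-1}(p_j^{-\phi_k(s,t)})(z,w)\bigr)_{j\in\N},\qquad \Psi=(\Psi_1,\Psi_2).
\]
By construction $\Psi(p^{-s},p^{-t})=(p^{-\phi_1(s,t)},p^{-\phi_2(s,t)})$, so a direct computation using the multiplicativity of the Bohr lift shows that $T(f)(z,w)=f(\Psi(z,w))=C_\Psi(f)(z,w)$ whenever $f$ is a polynomial on $B_{c_0}\times B_{c_0}$.

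Next I would check that $\Psi$ maps $B_{c_0^2}$ into itself. Each coordinate $\Psi_{k,j}$ belongs to $H^\infty(B_{c_0^2})$ with norm at most $1$, so by Montel's theorem a subsequence converges uniformly on compacta to some $F_k\in H^\infty(B_{c_0^2})$. Evaluating at $(p^{-s},p^{-t})$ and using that $|p_j^{-\phi_k(s,t)}|=p_j^{-\re\phi_k(s,t)}\to 0$ as $j\to\infty$, together with density of $\{(p^{-s},p^{-t}):s,t\in\C_+\}$ in $r\T^\infty\times r\T^\infty$ for the appropriate radius, forces $F_k\equiv 0$. A standard compactness argument then upgrades this to the pointwise statement $\Psi_{k,j}(z,w)\to 0$ for every $(z,w)\in B_{c_0^2}$, so $\Psi_k(z,w)\in c_0$; since $\|\Psi_{k,j}\|_\infty<1$ we obtain $\Psi(B_{c_0^2})\subset B_{c_0^2}$ and $C_\Psi:H^\infty(B_{c_0^2})\to H^\infty(B_{c_0^2})$ is well defined.

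The main difficulty, already highlighted in the paper, is that polynomials are not norm-dense in $H^\infty(B_{c_0^2})$, so the identity $T=C_\Psi$ cannot be extended from polynomials by $\|\cdot\|_\infty$-continuity. To overcome this, I would introduce, for each $\sigma>0$, the compact set
\[
K_\sigma=\overline{\{(p^{-s},p^{-t}):\re s\geq\sigma,\ \re t\geq\sigma\}}\subset B_{c_0^2},
\]
and on $H^\infty(B_{c_0^2})$ the metrizable topology $\tilde\tau$ of uniform convergence on all the $K_{1/n}$; on $\HCdos$ I would use the corresponding topology $\tau$ of uniform convergence on the closed half-strips $\overline{\C_{1/n}^2}$. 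Under $\mathcal{B}$ these two topologies are homeomorphic. Density of polynomials in $(H^\infty(B_{c_0^2}),\tilde\tau)$ follows from the uniform convergence of the monomial expansion of any $f\in H^\infty(B_{c_0^2})$ on each $K_\sigma$ (the sup-radius on $K_\sigma$ is some $r<1$) combined with the density of finite polynomials inside each space of homogeneous polynomials.

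It remains to verify that both $T$ and $C_\Psi$ are $\tilde\tau$-continuous. For $T$ this reduces, after transport by the homeomorphism $\mathcal{B}$, to the $\tau$-continuity of $C_\phi$, which follows from Lemma~\ref{range_phi_comp_op_double}: for every $\sigma>0$ there exists $\delta(\sigma)>0$ with $\phi(\C_\sigma^2)\subset\C_{\delta(\sigma)}^2$, so $\|C_\phi D\|_{\overline{\C_\sigma^2}}\leq\|D\|_{\overline{\C_{\delta(\sigma)}^2}}$. For $C_\Psi$, the same lemma together with the definition of $\Psi$ yields $\Psi(K_\sigma)\subset K_{\delta(\sigma)}$, hence $\|C_\Psi f\|_{K_\sigma}\leq\|f\|_{K_{\delta(\sigma)}}$. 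Since $T$ and $C_\Psi$ agree on polynomials and both are $\tilde\tau$-continuous on the $\tilde\tau$-dense subspace of polynomials, they agree on all of $H^\infty(B_{c_0^2})$, proving that $C_\phi$ is conjugate via $\mathcal{B}$ to the composition operator $C_\Psi$. The principal obstacle is precisely the density step, and identifying $\tilde\tau$ as the right topology (together with Lemma~\ref{range_phi_comp_op_double}, which is the substitute for the density of Dirichlet polynomials available in the $\mathcal H^p$ setting) is what makes it go through.
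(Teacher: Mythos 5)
Your proposal is correct and follows essentially the same route as the paper's own proof: conjugating by the Bohr lift, defining the symbol coordinatewise via $\mathcal{B}^{-1}(p_j^{-\phi_k})$, using Montel's theorem to get $\Psi(B_{c_0^2})\subset B_{c_0^2}$, and then passing from polynomials to all of $H^\infty(B_{c_0^2})$ via the topology of uniform convergence on the sets $K_\sigma$, with Lemma~\ref{range_phi_comp_op_double} supplying the continuity of both operators. The only cosmetic difference is that the paper allows two independent parameters $\sigma_1,\sigma_2$ for the half-planes, which changes nothing.
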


\begin{proof}
Let us recall how the bijective isometry $\mathcal{B}: H^\infty(B_{c_0^2}) \rightarrow \HCdos$ from  \cite[Theorem~3.5]{Nuestro}  (now with $k=2$) is defined. For each $f \in H^\infty(B_{c_0^2}$
with coefficients $c_{\alpha, \beta}(f)$ (that can be computed through the Cauchy integral formula) we have
\[
\mathcal B (f)
= \sum_{\alpha,\beta} \frac{c_{\alpha,\beta}(f)}{(p^\alpha)^{s}(p^\beta)^{t}}.
\]
Clearly $T= \mathcal{B}^{-1} \circ C_\phi \circ \mathcal{B}$ is an operator, $T: H^\infty(B_{c_0^2}) \rightarrow H^\infty(B_{c_0^2})$, and our aim is to see that it is actually a composition operator. For $j \in \N$ define $D^{(j)}_1 = C_{\phi} (\frac{1}{p_j^s}) = p_j^{-\phi_{1}(s,t)} \in \HCdos$ and $D^{(j)}_2 = C_{\phi} (\frac{1}{p_j^t}) = p_j^{-\phi_{2}(s,t)}\in \HCdos$ and $D^{(j)}=(D_1^{(j)}, D_2^{(j)})$. Note that $\mathcal{B}^{-1}(D_1^{(j)}), \mathcal{B}^{-1}(D_2^{(j)}) \in H^\infty(B_{c_0^2})$ for every $j \in \N$. Define formally $\Phi = (\Phi_{1}, \Phi_{2})=( (D_1^{(j)})_j, (D_2^{(j)})_j)$ and $\psi= (\psi_1, \psi_2)=((\mathcal{B}^{-1}D_1^{(j)})_j, (\mathcal{B}^{-1}D_2^{(j)})_j)$.
Then, if we consider a polynomial $f(z, \omega)= \sum_{\alpha, \beta \in \Lambda} c_{\alpha, \beta} z^\alpha \omega^\beta$ with $\Lambda$ finite and we denote by $\mathfrak{p}$ the sequence of prime numbers, we get
\begin{equation} \label{eq:T_and_C_psi_coincide_on_finite_polynomials_double}
\begin{split}
T(f)(z, \omega) & =\mathcal{B}^{-1} \left ( C_\phi \left ( \mathcal{B} \left (\sum_{\alpha, \beta \in \Lambda} c_{\alpha, \beta} z^\alpha \omega^\beta \right ) \right ) \right )  = \mathcal{B}^{-1} \left ( C_\phi \left (  \sum_{\alpha, \beta \in \Lambda} \frac{c_{\alpha,\beta}}{(\mathfrak{p}^\alpha)^s (\mathfrak{p}^\beta)^t} \right ) \right ) \\
& = \mathcal{B}^{-1} \left ( \sum_{\alpha, \beta \in \Lambda} \frac{c_{\alpha, \beta}}{(\mathfrak{p}^{\phi_1(s,t)})^\alpha (\mathfrak{p}^{\phi_2(s,t)})^\beta} \right ) = \mathcal{B}^{-1} \left ( \sum_{\alpha, \beta \in \Lambda} c_{\alpha, \beta} \Phi_1(s,t)^\alpha \Phi_2(s,t)^\beta \right ) \\ 
& =\sum_{\alpha, \beta \in \Lambda} c_{\alpha, \beta}( \mathcal{B}^{-1} \Phi_1)(z,\omega)^\alpha ( \mathcal{B}^{-1} \Phi_2)(z,\omega)^\beta  =\sum_{\alpha, \beta \in \Lambda} c_{\alpha, \beta} \psi_1(z,\omega)^\alpha \psi_2(z,\omega)^\beta .
\end{split}
\end{equation}
Therefore, $T$ coincides with the composition operator $C_\psi$ on finite polynomials, and we will see that they actually are the same operator. 
First we need to see that $C_\psi$ is well defined, namely that $\psi$ is a holomorphic function with $\psi(B_{c_0^2}) \subset B_{c_0^2}$. The holomorphy of $\psi$ follows from its definition. Let us define $F_1^{(j)}=\mathcal B^{-1}(D_1^{(j)})$ so that $\psi_1(z,w)=(F_1^{(j)}(z,w))$. Since
$\phi(\C_+^2)\subset \C_+^2$, 
$|D_1^{(j)}(s,t)|=p_j^{-\re \phi_1(s,t)}<1$ for every $(s,t)\in\C_+^2$, so that $\|F_1^{(j)}\|_\infty=\|D_1^{(j)}\|_\infty\leq 1$.
Assume by contradiction that there exists some $(z_0,w_0)\in B_{c_0^2}$ such that $\psi_1(z_0,w_0)$ does not belong to $c_0$. Then there exists an increasing sequence of integers
$(j_r)$ and $\e>0$ such that, for all $r\geq 1$, $|F_1^{(j_r)}(z_0,w_0)|\geq\e$.
By Montel's theorem, we may extract from $(F_1^{(j_r)})$ a sequence, that we will still denote $(F_1^{(j_r)})$, converging uniformly on compact subsets of $B_{c_0^2}$ to some
$F\in H^\infty(B_{c_0^2})$. Set $D=\mathcal BF$, so that $(D_1^{(j_r)})$ converges uniformly to $D$ on each product of half-planes $\CC_\e^2$. Now, for $(s,t)\in\C_+$,
\[ D_1^{(j_r)}(s,t)=p_{j_r}^{-\phi_1(s,t)}\xrightarrow{r\to+\infty}0 
\]
since $\re \phi_1(s,t)>0$. Thus $D$ hence $F$ are identically zero. But this contradicts $|F_1^{(j_r)}(z_0,w_0)|\geq\e$ for all $r\geq 1$. Finally, this yields that $\psi_1(B_{c_0^2})\subset c_0.$

To see that $T$ and $C_\psi$ are the same operator we will define a topology on $H^\infty(B_{c_0^2})$ so that the finite polynomials on $B_{c_0^2}$ are dense in $H^\infty(B_{c_0^2})$ and such that $T$ and $C_\psi$ are continuous, with the aim of extending \eqref{eq:T_and_C_psi_coincide_on_finite_polynomials_double} by continuity. Define $G:\C_+ \rightarrow B_{c_0}$ by $G(s) = \frac{1}{\mathfrak{p}^s}$ and consider $\tilde{\tau}$ the topology of uniform convergence on the product of half-planes $\overline{\C_{\sigma_1}} \times \overline{\C_{\sigma_2}}$ for $\HCdos$ and for $H^\infty(B_{c_0^2})$ we consider $\tau$ the topology of the uniform convergence on the compact subsets of $B_{c_0}$ of the form $K_{\sigma_1, \sigma_2} = \overline{\left\{ \left(\frac{1}{\mathfrak{p}^s},\frac{1}{\mathfrak{p}^t}\right) : \re s \geq \sigma_1, \re t \geq \sigma_2\right\}} = \overline{G(\overline{\C_{\sigma_1}})} \times  \overline{G(\overline{\C_{\sigma_2}})}= K_{\sigma_1} \times K_{\sigma_2}$. It should be noted that these topologies define metrizable spaces since we can take $\sigma_1 = \frac{1}{n}$, $\sigma_2= \frac{1}{m}$, $n,m \in \N$, and we get the same topologies. First, since for every $\sigma_1, \sigma_2 > 0$ there exists some $0 < r < 1$ such that $\sup_{ \substack{\re s \geq \sigma_1 \\ \re t \geq \sigma_2}} \left\| \left(\frac{1}{\mathfrak{p}^s},\frac{1}{\mathfrak{p}^t}\right) \right\|_\infty \leq r$, then any $f \in H^\infty(B_{c_0^2})$ has a uniformly convergent Taylor series on $K_{\sigma_1, \sigma_2}$. Moreover, by adapting the arguments from the proof of \cite[Theorem~2.5]{Aron17}, the set of finite polynomials on $B_{c_0^2}$ is dense in the space of homogeneous polynomials on $B_{c_0^2}$ with the topology induced by $\Vert\cdot\Vert_\infty$. Therefore we can extend \eqref{eq:T_and_C_psi_coincide_on_finite_polynomials_double} to homogeneous polynomials. Again, since the topology of $\Vert\cdot\Vert_\infty$ is finer than topology $\tau$, if we prove that $T$ and $C_\psi$ are continuous with the topology $\tau$, then we will be able to extend \eqref{eq:T_and_C_psi_coincide_on_finite_polynomials_double} to $H^\infty(B_{c_0^2})$ to get that $T=C_\psi$ as operators of $H^\infty(B_{c_0^2})$.

To see that $T$ is continuous, we just have to check that $C_\phi$ is continuous for the topology $\tilde{\tau}$ and that $\mathcal{B}$ defines a homeomorphism with the respective topologies $\tau$ and $\tilde{\tau}$. To prove that $C_\phi$ is continuous we have to apply Lemma~\ref{range_phi_comp_op_double} to $\phi$ to get that for every $\sigma_1, \sigma_2 > 0$ there exists some $\delta(\sigma_1), \delta(\sigma_2) > 0$ such that $\phi(\C_{\sigma_1} \times \C_{\sigma_2}) \subset \C_{\delta(\sigma_1)} \times \C_{\delta(\sigma_2)}$. Now, let $\{D_n\}_n \subset \HCdos$ be a sequence convergent to $D \in \HCdos$ with $\tilde{\tau}$. As 
$$\Vert C_\phi(D_n) - C_\phi(D) \Vert_{\C_{\sigma_1} \times \C_{\sigma_2}} = \Vert D_n \circ \phi - D \circ \phi \Vert_{\C_{\sigma_1} \times \C_{\sigma_2}} \leq \Vert D_n - D \Vert_{\C_{\delta(\sigma_1)} \times \C_{\delta(\sigma_2)}},$$
$C_\phi$ is continuous. Now, to see that $\mathcal{B}$ is a homeomorphism with the respective topologies, suppose that $\{f_n\}_n \subset H^\infty (B_{c_0^2})$ is a sequence convergent to $f \in H^\infty (B_{c_0^2})$ with $\tau$. Then, using the continuity of $f_n$ and $f$,
\begin{align*}
\Vert\mathcal{B}(f_n) - \mathcal{B}(f)\Vert_{\C_{\sigma_1} \times \C_{\sigma_2}}& = \sup_{ \substack{\re s \geq \sigma_1 \\ \re t \geq \sigma_2}} |\mathcal{B}(f_n)(s,t) - \mathcal{B}(f)(s,t) |\\
& = \sup_{\substack{\re s \geq \sigma_1 \\ \re t \geq \sigma_2}} \left|f_n\left(\frac{1}{\mathfrak{p}^s},\frac{1}{\mathfrak{p}^t}\right) - f\left(\frac{1}{\mathfrak{p}^s},\frac{1}{\mathfrak{p}^t}\right) \right|\\
&= \sup_{(z,\omega) \in K_{\sigma_1,\sigma_2}} |f_n(z,\omega) - f(z,\omega)| \\
& = \Vert f_n - f \Vert_{K_{\sigma_1,\sigma_2}},
\end{align*}
so clearly $\mathcal{B}$ is a homeomorphism between the topological spaces $(H^\infty(B_{c_0^2}), \tau)$ and $(\HCdos, \tilde{\tau})$, giving that $T$ is continuous with the topology $\tau$.

It remains to prove that $C_\psi$ is continuous with the topology $\tau$. Let us recall that $\psi=( (\mathcal{B}^{-1}(D^{(j)}_1))_j,(\mathcal{B}^{-1}(D^{(j)}_2))_j)$. Hence, if $\sigma_1, \sigma_2 > 0$, take $(z,\omega) \in G(\overline{\C_{\sigma_1}}) \times G(\overline{\C_{\sigma_2}})$, that is, $(z, \omega) = (\frac{1}{\mathfrak{p}^s},\frac{1}{\mathfrak{p}^t})$ for some $(s,t) \in \overline{\C_{\sigma_1}} \times \overline{\C_{\sigma_2}}$, and then $\mathcal{B}^{-1}(D^{(j)}_1)(z,\omega) = D_1^{(j)}(s,t) = \frac{1}{p_j^{\phi_1(s,t)}}$, so 
$$\mathcal{B}^{-1}(D^{(j)}_1)(G(\overline{\C_{\sigma_1}}) \times G(\overline{\C_{\sigma_2}})) 
\subset  \left\{ \frac{1}{p_j^{\phi_1(s,t)}} : \re s \geq \sigma_1, \re t \geq \sigma_2\right \} \subset \left\{ \frac{1}{p_j^s} : \re s \geq \delta(\sigma_1)\right \},$$
and therefore $\psi_1(K_{\sigma_1, \sigma_2}) \subset \overline{ \left\{ \frac{1}{\mathfrak{p}^s} : \re s \geq \delta(\sigma_1) \right\} } = K_{\delta(\sigma_1)}$. Analogously, $\psi_2(K_{\sigma_1, \sigma_2}) \subset K_{\delta(\sigma_2)}$, so $\psi(K_{\sigma_1, \sigma_2}) \subset K_{\delta(\sigma_1)} \times K_{\delta(\sigma_2)} = K_{\delta(\sigma_1),\delta(\sigma_2)}$. Then, if $\{f_n\}_n \subset H^\infty (B_{c_0^2})$ is a sequence convergent to $f \in H^\infty (B_{c_0^2})$ with $\tau$,
$$ \Vert C_\psi(f_n) - C_\psi(f) \Vert_{K_{\sigma_1,\sigma_2}} = \Vert f_n \circ \psi - f \circ \psi \Vert_{K_{\sigma_1,\sigma_2}} \leq \Vert f_n - f \Vert_{K_{\delta(\sigma_1),\delta(\sigma_2)}},$$
which gives the continuity of $C_\psi$ with the topology $\tau$.

Finally, as every function in $H^\infty(B_{c_0^2})$ is the limit of a series of finite polynomials with the topology $\tau$, and by \eqref{eq:T_and_C_psi_coincide_on_finite_polynomials_double}, $T$ and $C_\psi$ coincide on finite polynomials, $T = C_\psi$.

\end{proof}

\section{Superposition operators}

In this section we deal with superposition operators, which are defined by $S_\varphi(f) = \varphi \circ f$ where $f$ belongs to some function space and $\varphi$ is defined on $\C$.  We are especially interested in superposition operators acting on spaces of Dirichlet series, precisely on $\mathcal{H}^p$ and on $\HC$.
Let us first recall the characterization of superposition operators on the classical Hardy spaces $H^p(\D)$, which was given in \cite{Camera95}.

\begin{theorem} \label{characterization_superposition_operators_H^p(D)}
An entire function $\varphi$ defines a superposition operator $S_\varphi : H^p(\D) \rightarrow H^q(\D)$ if and only if $\varphi$ is a polynomial of degree at most $\lfloor \frac{p}{q} \rfloor$.
\end{theorem}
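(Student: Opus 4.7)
The plan is to treat the two implications separately. For sufficiency, the key observations are the scaling identity $\Vert f^k \Vert_{H^{p/k}(\D)} = \Vert f \Vert_{H^p(\D)}^k$ for $f \in H^p(\D)$ and the continuous inclusion $H^r(\D) \hookrightarrow H^s(\D)$ whenever $r \geq s$. If $\varphi(w) = \sum_{k=0}^n a_k w^k$ with $n \leq \lfloor p/q\rfloor$, then for every $0 \leq k \leq n$ we have $p/k \geq p/n \geq q$, hence $f^k \in H^q(\D)$, and the finite sum $\varphi(f) = \sum_{k=0}^n a_k f^k$ lies in $H^q(\D)$.

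For the necessary condition I would prove two claims separately: (a) $\varphi$ must be a polynomial, and (b) its degree is at most $\lfloor p/q\rfloor$. The degree bound (b) is the easier part, and I would obtain it by testing $S_\varphi$ on the family $f_\alpha(z) = (1-z)^{-\alpha}$ with $0 < \alpha < 1/p$, which all lie in $H^p(\D)$. If $\varphi$ has already been shown to be a polynomial of degree $n$ with leading coefficient $a_n \neq 0$, then near $z = 1$,
\[
\varphi(f_\alpha)(z) = a_n (1-z)^{-n\alpha} + O\bigl((1-z)^{-(n-1)\alpha}\bigr),
\]
and the $H^q(\D)$-membership of the dominant term forces $n\alpha < 1/q$. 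Letting $\alpha \uparrow 1/p$ yields $n/p \leq 1/q$, so $n \leq \lfloor p/q\rfloor$ since $n \in \N_0$.

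I expect claim (a) to be the main obstacle. The strategy is a contrapositive argument: assuming that $\varphi$ is transcendental, so that $M(r,\varphi)/r^N \to \infty$ for every $N \in \N$, I would exhibit a test function $f \in H^p(\D)$ with $\varphi \circ f \notin H^q(\D)$. A natural candidate is an unbounded outer function of logarithmic type such as $f(z) = \bigl(\log(e/(1-z))\bigr)^\beta$, which belongs to every $H^p(\D)$ and whose boundary modulus behaves like $|\log|\theta||^\beta$ as $\theta \to 0$. The delicate point is to choose $\beta$ (and possibly a rotation or conformal adjustment of $f$) so that the boundary values follow a direction in which $\varphi$ grows nearly maximally, ensuring that $|\varphi(f(e^{i\theta}))|$ remains large on a set of positive measure rather than only along interior curves, which in turn forces $\int_0^{2\pi}|\varphi(f(e^{i\theta}))|^q\,d\theta = +\infty$. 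Implementing this likely requires a Phragm\'en--Lindel\"of argument in an appropriate sector to lift the growth of $\varphi$ from a ray to a sector of positive measure; this is where the bulk of the technical work lies.
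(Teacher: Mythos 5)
First, a contextual remark: the paper does not prove this theorem at all; it quotes it from C\'amera \cite{Camera95}, so there is no internal proof to measure you against. Judged on its own terms, your sufficiency argument is correct (the identity $\Vert f^k\Vert_{H^{p/k}(\D)}=\Vert f\Vert_{H^p(\D)}^k$ plus the inclusion $H^{p/k}(\D)\hookrightarrow H^q(\D)$ for $p/k\ge q$ does the job, and finite sums cause no trouble even when $q<1$), and your degree bound (b) via the test functions $(1-z)^{-\alpha}$, $\alpha\uparrow 1/p$, is sound \emph{once it is known that $\varphi$ is a polynomial}.

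The genuine gap is claim (a), which is the heart of the necessity and which you explicitly leave open. Your proposed strategy --- a fixed logarithmic outer function $f(z)=(\log(e/(1-z)))^\beta$, adjusted by a rotation, plus a Phragm\'en--Lindel\"of step --- is not just technically incomplete but unlikely to close as stated. The obstruction is that for a transcendental $\varphi$ one only knows $M(r,\varphi)/r^N\to\infty$ along a sequence of radii and at \emph{some} point of each circle; the set where $|\varphi|$ exceeds any given polynomial can be a thin, possibly spiralling, region, and the boundary values of any single log-type test function trace a curve whose argument tends to $0$, so no rotation guarantees that this curve spends a set of $\theta$'s of positive measure inside the region where $\varphi$ is large. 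Moreover, even granting the right direction, you have no uniform control of $\Vert\varphi\circ f\Vert_{H^q(\D)}$ over a family of test functions, because $S_\varphi$ is only assumed to be everywhere defined, not bounded. The standard way to prove (a) is quite different: one shows by a Baire category argument (the sets $\{f:\Vert\varphi\circ f\Vert_{H^q}\le n\}$ are closed in $H^p(\D)$ by Fatou's lemma and uniform convergence on compacta) that $S_\varphi$ is uniformly bounded on some ball, then tests on the two-parameter family $f_{\alpha,\beta}(z)=\beta(1-z)^{-\alpha}$ and combines the pointwise estimate $|g(z)|\le C\Vert g\Vert_{H^q}(1-|z|)^{-1/q}$ for $g\in H^q(\D)$ with the maximum principle in the parameter $\beta$ to bound $M(R,\varphi)$ by a power of $R$, forcing $\varphi$ to be a polynomial. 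None of that machinery appears in your sketch, so the necessity direction is not established.
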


\begin{remark}
As a matter of fact, the assumption of $\varphi$ to be entire can be dropped.
If $\varphi:\C\to\C$ induces a superposition operator $S_\varphi$ mapping $H^p(\D)$ into $H^q(\D)$, then, since $f(z)=rz$ belongs to $H^p(\D)$ for all $r>0$,
the function $z\mapsto \varphi(rz)$ is analytic in $\D$ for all $r>0$, hence $\varphi$ is entire.
\end{remark}

Recall that the Bohr transform induces an isometric isomorphism from $H^p(\T^\infty)$ onto $\mathcal{H}^p$. The  subspace of $\mathcal{H}^p$ consisting of Dirichlet series of the form $\sum_{k=1}^\infty a_{2^k} \frac{1}{(2^k)^s}$ is isometrically isomoporphic to $H^{p} (\mathbb{T}) \hookrightarrow H^{p} (\mathbb{T}^{\infty})$. Let us recall also that $H^{p}(\mathbb{T})$ is isometrically isomorphic to $H^{p} (\mathbb{D})$. We are going to see that the superposition operators on  $\mathcal{H}^p$ are in fact the same ones as on $H^{p} (\mathbb{D})$.

\begin{theorem}
A function $\varphi:\C\to \C$ defines a superposition operator $S_\varphi : \mathcal{H}^p \rightarrow \mathcal{H}^q$ if and only if $\varphi$ is a polynomial of degree at most $\lfloor \frac{p}{q} \rfloor $.
\end{theorem}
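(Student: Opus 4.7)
The plan is to reduce the statement to the classical Theorem~\ref{characterization_superposition_operators_H^p(D)} by exploiting the Bohr transform to embed a copy of $H^p(\mathbb{D})$ inside $\mathcal{H}^p$ as an $S_\varphi$-invariant subspace. Before tackling the equivalence, I would dispose of the implicit assumption that $\varphi$ is entire, exactly as in the remark following Theorem~\ref{characterization_superposition_operators_H^p(D)}: for every $r>0$ the Dirichlet series $f(s)=r\cdot 2^{-s}$ lies in $\mathcal{H}^\infty \subset \mathcal{H}^p$, so $s \mapsto \varphi(r\cdot 2^{-s})$ must be an element of $\mathcal{H}^q$ and hence analytic on $\mathbb{C}_+$. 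Through the change of variables $z = 2^{-s}$ this forces $z \mapsto \varphi(rz)$ to be analytic on $\mathbb{D}$ for every $r>0$, so $\varphi$ is entire.

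For the necessity, let $V_p$ denote the closed subspace of $\mathcal{H}^p$ consisting of Dirichlet series of the form $\sum_{k \geq 0} a_k (2^k)^{-s}$. Under the Bohr transform, $V_p$ corresponds to the functions in $H^p(\mathbb{T}^\infty)$ depending only on the first coordinate, which form an isometric copy of $H^p(\mathbb{T}) \cong H^p(\mathbb{D})$; explicitly, $\sum_k a_k (2^k)^{-s}$ is identified with $g(z) = \sum_k a_k z^k \in H^p(\mathbb{D})$. If $f \in V_p$ has associated function $g$, then $(\varphi \circ f)(s) = \varphi(g(2^{-s}))$, so $\varphi \circ f$ again lies in the copy of $H^q(\mathbb{D})$ inside $\mathcal{H}^q$ and corresponds under the identification to $\varphi \circ g$. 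Hence $S_\varphi$ restricts to the classical superposition operator $S_\varphi : H^p(\mathbb{D}) \to H^q(\mathbb{D})$, and Theorem~\ref{characterization_superposition_operators_H^p(D)} forces $\varphi$ to be a polynomial of degree at most $\lfloor p/q \rfloor$.

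For the sufficiency, suppose $\varphi(z) = \sum_{k=0}^d c_k z^k$ with $d \leq \lfloor p/q \rfloor$, and take $f \in \mathcal{H}^p$. It suffices to show $f^k \in \mathcal{H}^q$ for every $0 \leq k \leq d$. Let $F \in H^p(\mathbb{T}^\infty)$ be the Bohr transform of $f$. Since $F^k$ has Fourier coefficients supported on multi-indices with non-negative entries and
\[
\int_{\mathbb{T}^\infty} |F^k|^{p/k}\,dm = \int_{\mathbb{T}^\infty} |F|^p\,dm < \infty,
\]
we have $F^k \in H^{p/k}(\mathbb{T}^\infty)$, i.e.\ $f^k \in \mathcal{H}^{p/k}$. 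Because $p/k \geq p/d \geq q$, the standard continuous inclusion $\mathcal{H}^{p/k} \hookrightarrow \mathcal{H}^q$ yields $f^k \in \mathcal{H}^q$, and summing gives $\varphi \circ f = \sum_{k=0}^d c_k f^k \in \mathcal{H}^q$.

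The only delicate point is the bookkeeping that identifies $S_\varphi$ restricted to $V_p$ with the superposition operator on $H^p(\mathbb{D})$; but since $V_p$ is isometrically $H^p(\mathbb{D})$ under the substitution $z=2^{-s}$ and $\varphi$-composition obviously commutes with this substitution, there is no genuine obstacle, and the theorem reduces to Theorem~\ref{characterization_superposition_operators_H^p(D)} together with the multiplicative behavior of $H^p(\mathbb{T}^\infty)$-norms under taking powers.
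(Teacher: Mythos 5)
Your necessity argument follows the same route as the paper's: both embed $H^p(\mathbb{D})$ isometrically into $\mathcal{H}^p$ as the Dirichlet series supported on the powers of $2$, check that $S_\varphi$ carries this copy into the corresponding copy of $H^q(\mathbb{D})$ and intertwines with the classical superposition operator, and then invoke Theorem~\ref{characterization_superposition_operators_H^p(D)}. Your sufficiency argument is genuinely different. The paper proves the pointwise bound $\Vert P^k\Vert_q^q\leq\Vert P\Vert_p^p$ for Dirichlet \emph{polynomials} via a Young-type inequality and then extends to all of $\mathcal{H}^p$ by density (a step that needs some care, since $P\mapsto P^k$ is nonlinear). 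You instead work directly with an arbitrary $f\in\mathcal{H}^p$ on the Bohr side, using the identity $\int_{\mathbb{T}^\infty}|F^k|^{p/k}\,dm=\int_{\mathbb{T}^\infty}|F|^p\,dm$ to place $F^k$ in $H^{p/k}(\mathbb{T}^\infty)$ and then the nesting $H^{p/k}(\mathbb{T}^\infty)\hookrightarrow H^{q}(\mathbb{T}^\infty)$. This is cleaner and avoids the density/approximation step altogether, at the price of invoking two standard facts that you should at least cite: that the product of functions in Hardy spaces of $\mathbb{T}^\infty$ again has spectrum in the positive cone, and that the Bohr lift intertwines pointwise powers, so that $\mathcal{B}^{-1}(F^k)$ really is the function $s\mapsto f(s)^k$.

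One step does need repair: your derivation that $\varphi$ is entire. The substitution $z=2^{-s}$ maps the half-plane on which $s\mapsto\varphi(r\,2^{-s})$ is known to be holomorphic onto a \emph{punctured} disc; the origin is never attained. So your argument only yields analyticity of $\varphi$ on $\mathbb{C}\setminus\{0\}$, and a priori $\varphi$ could still have an essential singularity at $0$ (the function $w\mapsto e^{1/w}$ is analytic on every punctured disc). The paper closes exactly this hole by observing that $s\mapsto\varphi(r\,2^{-s})$, being an absolutely convergent Dirichlet series, is bounded far to the right, so $\varphi$ is bounded near $0$ and the singularity is removable. (Alternatively, running your own argument with $f(s)=c+r\,2^{-s}$ for $c\neq 0$ produces punctured discs centred at $c$ that cover the origin.) With that one-line fix the proof is complete.
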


\begin{proof}
We first see that if $\varphi$ is a polynomial of degree $N \leq \lfloor \frac{p}{q} \rfloor$ then $S_{\varphi}$ defines a superposition operator. Take first $\varphi_k(w) = w^k$. By Young's inequality we have $a^n \leq \frac{n}{p} a^p$ whenever $a>0$ and $\frac{n}{p}<1$. Then, since $\frac{kq}{p} \leq 1$ for all $1 \leq k \leq N$, given a Dirichlet polynomial $P$, we get 
\begin{align*}
\left\|\varphi_k(P)\right\|_q^q &= \lim_{T \rightarrow \infty} \frac{1}{2T} \int_{-T}^T |\varphi_k(P(it))|^q dt  \\ 
&=  \lim_{T \rightarrow \infty} \frac{1}{2T} \int_{-T}^T \left | (P(it))^k \right |^q dt  \\
& \leq \lim_{T \rightarrow \infty} \frac{1}{2T} \int_{-T}^T \frac{kq}{p} |P(it)|^{p} dt\\
&\leq \Vert P\Vert_p^{p},
\end{align*}
so $\varphi_k(P) \in \mathcal{H}^q$. Now, if $\varphi(w) = \sum_{k=0}^N b_k w^k$, then $\varphi(P) = \sum_{k=0}^N b_k \varphi_k(P) \in \mathcal{H}^q$ for every Dirichlet polynomial $P \in \mathcal{H}^p$. Using that Dirichlet polynomials are dense in $\mathcal{H}^p$ and $\mathcal{H}^q$ with the respective norms is enough to get the desired result.\\

On the other hand, suppose now that $\varphi$ defines a superposition operator $S_\varphi : \mathcal{H}^p \rightarrow \mathcal{H}^q$. First of all, since $s\mapsto \varphi\left(\frac{\sqrt{2} R}{2^{-s}}\right)$ is holomorphic in $\mathbb{C}_{1/2}$, then taking two different branches of the complex logarithm we get that $\varphi$ is holomorphic in $B(0,R) \setminus \{0\}$ for every $R>0$. 
Moreover, $s\mapsto \varphi\left(\frac{\sqrt{2} R}{2^{-s}}\right)$ is an absolutely convergent Dirichlet series, and therefore is bounded in $\C_\sigma$ for every $\sigma>1$.
Since $0$ is an isolated singularity of $\varphi$ we get that $\varphi$ is entire. Now let $f \in H^p(\D)$, $\mathcal{B}(f) \in \mathcal{H}^p$ and $\varphi \circ \mathcal{B}(f) \in \mathcal{H}^q$.
Since the Taylor series of $\varphi$ converges absolutely on $\C$, the Taylor series of $f$ converges absolutely on $\D$ and the Dirichlet series $\mathcal{B}(f)(s)$ converges absolutely in $\C_\sigma$ 
for any $\sigma >1$ then there exists $\{a_n\}_n$ such that $(\varphi \circ \mathcal{B}(f))(s) = \sum_{n=0}^\infty \frac{a_n}{(2^n)^s}$ for every $s \in \C_\sigma$, $\sigma >1$, and also 
$(\varphi \circ f)(z) = \sum_{n=0}^\infty a_n z^n$ for every $z \in \D$. Hence $\varphi \circ f = \mathcal{B}^{-1}(\varphi \circ \mathcal{B}(f)) \in H^q(\D)$,
thus $\varphi$ defines a superposition operator $S_\varphi : H^p(\D) \rightarrow H^q(\D)$ and consequently it is a polynomial of degree at most $\lfloor \frac{p}{q} \rfloor $. 
\end{proof}

\begin{remark}
It is interesting to note here the differences between the spaces $\mathcal{H}^p$ and $\HC$ regarding superposition operators. 
While only polynomials of a certain degree will define superposition operators $S_\varphi : \mathcal{H}^p \rightarrow \mathcal{H}^q$, the fact that $\HC$ is an algebra gives trivially
that any polynomial defines a superposition operator on $\HC$. This leads easily to the fact that any entire function defines a superposition operator.
Indeed, since entire functions are uniformly approximated by their Taylor series on all compact sets, in particular on the image of any function of $\mathcal H^\infty(\C_+)$, if $\varphi(z) = \sum_{n=1}^\infty a_n z^n$ and $f\in\HC$,  then $S_\varphi(f)$ is the uniform limit of $S_{\varphi_N}(f) \in \HC$ where $\varphi_N=\sum_{n=1}^N a_n z^n$. Using that $\HC$ is complete is enough to get that $S_\varphi: \HC \rightarrow \HC$ is a superposition operator.
\end{remark}


\begin{thebibliography}{10}

\bibitem{Aron17}
R.~M. Aron, F.~Bayart, P.~M. Gauthier, M.~Maestre, and V.~Nestoridis.
\newblock Dirichlet approximation and universal {D}irichlet series.
\newblock {\em Proc. Amer. Math. Soc.}, 145(10):4449--4464, 2017.

\bibitem{Bayart02}
F.~Bayart.
\newblock Hardy spaces of {D}irichlet series and their composition operators.
\newblock {\em Monatsh. Math.}, 136(3):203--236, 2002.

\bibitem{Camera95}
G.~A. C\'{a}mera.
\newblock Nonlinear superposition on spaces of analytic functions.
\newblock In {\em Harmonic analysis and operator theory ({C}aracas, 1994)},
  volume 189 of {\em Contemp. Math.}, pages 103--116. Amer. Math. Soc.,
  Providence, RI, 1995.


\bibitem{Nuestro}
J.~Castillo-Medina, D.~García, and M.~Maestre.
\newblock Isometries between spaces of multiple dirichlet series.
\newblock {\em Journal of Mathematical Analysis and Applications}, 472(1):526
  -- 545, 2019.

\bibitem{DeGaMaSe18}
A.~Defant, D.~Garc\'{i}a, M.~Maestre, and P.~Sevilla-Peris.
\newblock Dirichlet series from the infinite dimensional point of view.
\newblock {\em Funct. Approx. Comment. Math.}, 59(2):285--304, 2018.

\bibitem{DeGaMaSe}
A.~Defant, D.~Garc\'{i}a, M.~Maestre, and P.~Sevilla-Peris.
\newblock {\em Dirichlet Series and Holomorphic Funcions in High Dimensions},
  volume~37 of {\em New Mathematical Monographs}.
\newblock Cambridge University Press, 2019.

\bibitem{Gordon_Hedenmalm99}
J.~Gordon and H.~Hedenmalm.
\newblock The composition operators on the space of {D}irichlet series with
  square summable coefficients.
\newblock {\em Michigan Math. J.}, 46(2):313--329, 1999.

\bibitem{Hedenmalm_Lindqvist_Seip_97}
H.~Hedenmalm, P.~Lindqvist, and K.~Seip.
\newblock A {H}ilbert space of {D}irichlet series and systems of dilated
  functions in {$L^2(0,1)$}.
\newblock {\em Duke Math. J.}, 86(1):1--37, 1997.


\bibitem{Queffelec13}
H.~Queff\'{e}lec and M.~Queff\'{e}lec.
\newblock {\em Diophantine approximation and {D}irichlet series}, volume~2 of
  {\em Harish-Chandra Research Institute Lecture Notes}.
\newblock Hindustan Book Agency, New Delhi, 2013.

\bibitem{Queffelec15}
H.~Queff\'{e}lec and K.~Seip.
\newblock Approximation numbers of composition operators on the {$H^2$} space
  of {D}irichlet series.
\newblock {\em J. Funct. Anal.}, 268(6):1612--1648, 2015.


\end{thebibliography}
\end{document}